\documentclass[11pt]{article}

\usepackage[margin=1in]{geometry}
\usepackage[numbers,sort&compress]{natbib}

\AddToHook{package/subcaption/after}{%

}

\usepackage{graphicx}
\usepackage{epstopdf}
\ifpdf
    \DeclareGraphicsExtensions{.eps,.pdf,.png,.jpg}
\else
    \DeclareGraphicsExtensions{.eps}
\fi

\usepackage{pgfplots}
\usepackage{pgfplotstable}
\pgfplotsset{compat=1.18}
\definecolor{oxfordnavy}{HTML}{1B2A4A}
\definecolor{signalorange}{HTML}{E8650A}
\definecolor{leafgreen}{HTML}{3DAA6E}
\usepgfplotslibrary{fillbetween}
\usepackage{amsmath,amssymb,amsfonts,amscd}
\usepackage{amsthm}
\usepackage[all,cmtip]{xy}
\usepackage{enumerate}
\usepackage{enumitem}
\usepackage{latexsym}
\usepackage{url}
\usepackage{nicematrix}
\usepackage{tikz}
\usepackage{algpseudocode,algorithm}
\usepackage{caption, subcaption}

\renewcommand{\p@subfigure}{}

\newenvironment{subfigure}[2][]{\begin{minipage}{#2}\refstepcounter{subfigure}\def\caption##1{\par\smallskip\centering\textbf{(\alph{subfigure})}~##1\par}}{\end{minipage}}
\usepackage{mathtools}
\usepackage{hyperref}
\usepackage{cleveref}
\usepackage{lineno}

\newcommand{\GN}{generalized Nyström}
\newcommand{\Ny}{Nyström}
\newcommand{\GNa}{generalized Nyström approximation}

\newcommand{\gnX}{\mathbf{A}_{\textup{GN}}}
\newcommand{\curX}{\mathbf{A}_{\textup{CUR}}}

\newcommand{\F}{\mathbb{F}}
\newcommand{\jcol}[2]{#1_{:, #2}} 
\newcommand{\jrow}[2]{#1_{#2, :}} 
\newcommand{\jk}[3]{[#1]_{#2, #3}} 
\newcommand{\njcol}[2]{#1_{:, -#2}} 

\definecolor{cSVD}{RGB}{0, 0, 0}
\definecolor{cGN}{RGB}{227, 178, 60}
\definecolor{cLTO}{RGB}{0, 20, 83}
\definecolor{cLPO}{RGB}{251, 97, 7}
\definecolor{cLCO}{RGB}{247, 37, 133}

\usepackage{amsopn}

\newtheorem{theorem}{Theorem}
\newtheorem{lemma}{Lemma}
\newtheorem{corollary}{Corollary}

\begin{document}
\title{Blind error estimation for CUR approximation}
\author{
    Lorenzo Lazzarino\textsuperscript{1}, Katherine J. Pearce\textsuperscript{2}, and Nathaniel Pritchard\textsuperscript{1}\\
    \small \textsuperscript{1}Mathematical Institute, University of Oxford, Oxford, OX2 6GG, UK\\
    \small \textsuperscript{2}Department of Mathematics and Statistics, University of New Mexico, Albuquerque, NM, USA\\
    \small Correspondence: \texttt{lorenzo.lazzarino@maths.ox.ac.uk}
}
\date{}
\maketitle
\frenchspacing

\begin{abstract}
    Low-rank approximation is a fundamental tool for scalable matrix computations. While such approximations have classically been formed via the truncated SVD, recent advances in randomized numerical linear algebra have produced methods of comparable accuracy at a fraction of the cost. A key advantage of these approaches is their ability to operate with limited access to the full matrix. A prominent example is the CUR decomposition, which builds an approximation from only a subset of columns and rows, making it particularly well-suited to settings where global matrix access is unavailable, such as in the design of spectro-microscopy experiments where even matrix-vector products cannot be computed.
    However, the randomness inherent to these methods introduces a key challenge: efficiently assessing approximation accuracy under the stringent constraint that only a subset of matrix entries can be observed. We derive a "blind" estimator of the Frobenius error of a CUR decomposition. For this estimator, we derive a worst-case lower bound on the number of required column queries, and provide techniques for quantifying its uncertainty. Finally, we demonstrate the performance of the estimator and the uncertainty sets on a mix of synthetic problems and real spectro-microscopy examples.
\end{abstract}

\noindent\textbf{Keywords:} Low-rank approximation; randomized algorithms; CUR; error estimation.

\noindent\textbf{Funding:} LL is supported by the Ada Lovelace Centre Programme at the Scientific Computing Department, STFC.

\section{Introduction}
Low-rank approximation is central to modern scientific computing, data science, engineering, and other computational sciences.
In these applications, matrices have become increasingly larger-scale and prohibitively expensive to use directly in computations. Worse, in a growing number of settings, matrices are not available as an explicit array or even as a routine for computing matrix-vector products: only individual entries of the matrix can be queried, often at significant cost (a leading example, spectro-microscopy, is discussed below).
Though the singular value decomposition (SVD) yields optimal low-rank decompositions for a given target rank \cite{eckart1936}, the cost of computing it scales poorly with the problem size, and it presupposes access to the full matrix, or at least the ability to apply it to vectors.
Alternative low-rank approximations are therefore preferable even when the matrix is fully accessible, purely on grounds of complexity and scaling; in the restricted access regime that motivates this paper, however, they become a necessity.

Among these alternatives, the CUR decomposition \cite{DrineasMahoney2009} is distinguished by its structure-preserving ``natural bases'' built from an approximate basis of the row and column spaces that consists of actual rows and columns of the input matrix, known as skeletons. Importantly, computing a CUR approximation does not necessarily require matrix-vector products.
This sets CUR apart from randomized SVD \cite{halko2011finding}, Nystr\"om \cite{williams20000}, and generalized Nystr\"om approximations \cite{nakatsukasaFastStableRandomized2020}, all of which intrinsically rely on applying the matrix to one or more random test matrices. In the restricted access settings we consider, CUR is consequently not merely convenient, but often the only available option.
Beyond this practical necessity, CUR decompositions preserve many properties of the input matrix, such as sparsity, non-negativity, interpretability in the context of the original data, and memory-efficiency, as only the skeleton indices, rather than dense factor matrices, need to be stored to reconstruct the row and column skeleton matrices.

 Moreover, it is possible to select skeletons so that the resulting approximation is quasi-optimal: the recent work of \cite{Osinsky2025} circumvents the issue of adversarial inputs and achieves an accuracy within a linear rank-dependent constant of the best error attained by the SVD.
This guarantee, however, comes at the price of computing an SVD of the input matrix, making the approach impossible in our setting, and impractical in most applications. In practice, greedy, adaptive, and often randomized skeleton-selection methods are relied upon instead, trading worst-case robustness for compatibility with restricted access \cite{CortinovisKressner2026, Deshpande2010, Drineas2006, Drmac2016, Frieze2004, Goreinov1997, DrineasMahoney2009, Osinsky2025, DongChenMartPearce2025, Pritchard2025, Sorensen2016, Stewart1999}.
A core remaining challenge is understanding how to certify the accuracy of a particular approximation, once such an approximation has been built, without any further access to the matrix beyond what restricted access already permits.

This question is not merely theoretical. A concrete and increasingly important example is spectro-microscopy, in which the spatial and spectral identities of materials are reconstructed from data collected during synchrotron experiments.
There, the acquisition of data, corresponding to individual entries of the associated matrix, is the true experimental bottleneck, and the reconstruction task can be reinterpreted as constructing an accurate CUR approximation of very low, but generally unknown, rank \cite{meier2026reducingacquisitiontimeradiation}.
Because every additional entry carries a significant acquisition cost (up to 5 days to form the full matrix in Spectro-microscopy \cite{hirose2019oxygendiffusiondriven}) reconstructing material identities in this way requires the support of error estimates of the CUR approximation without acquiring, or accessing, the entire matrix.
This application, together with related ones such as spectro-ptychography and spectro-tomography, motivates the \emph{blind} error estimators for CUR that we develop in this work: estimators that access the matrix only through a small subset of its entries, and never require access to the full matrix or to a matrix-vector product.

Randomization is valuable not only for constructing low-rank approximations, but also for the a posteriori estimation of their error once computed.
When matrix-vector products are available, sketching the error residual has been shown to work very well, even for a very small number of sketch vectors \cite[Sect. 12]{martinssonRandomizedNumericalLinear2020a}.
This tool, however, is unavailable in the restricted-access settings described above: with matrix-vector products out of reach and new individual entries costly to acquire, as in spectro-microscopy. Therefore, any viable estimator must, like the CUR approximation it certifies, make do with a small number of additional entries and nothing more.

A natural starting point is to ask whether the leave-one-out (LOO) framework for Nyström \cite{EpperlyTropp_VarianceEst2024} and generalized Nyström (GN) \cite{LazPearPrit26} approximations can be adapted to the closely related CUR approximation.
These LOO formulas yield to error estimates using only pre-computed quantities that are used to build the low-rank approximations.
Because the CUR decomposition may be viewed as a \GN{} decomposition in which random matrices are replaced by sub-matrices of the identity, one might hope that similar LOO formulas carry over to CUR.
However, as we will show in Section \ref{sec:CUR}, this seemingly small but important difference can cause the analogous LOO error estimates for CUR approximations to fail.

Inspired by the LOO approach and motivated by its limitations in the CUR setting, we propose a blind estimator of the CUR approximation error grounded in the theory of the Horvitz-Thompson estimation \cite{horvitz1952generalization}.
Namely, given a matrix $\mathbf{A}$ and its CUR approximation $\curX$ for some selected row and column skeletons, we show that randomly selecting a few additional columns of the error matrix $\mathbf{A}-\curX$ (crucially, columns not used in the construction of $\curX$) is often sufficient for estimating the error.
We prove that the square of our estimator is unbiased for the squared CUR approximation error, bound its variance, propose two ways to quantify the uncertainty of our estimator, provide a worst-case analysis, and demonstrate the robustness of its root-scale counterpart through numerical experiments on adversarial matrices and spectro-microscopy datasets.

\section{Preliminaries}

We begin by summarizing notation and key concepts from linear algebra that appear in our work.

\subsection{Notation}
\label{sec:notation}

We denote matrices by bold capital letters and vectors by bold lowercase letters, whereas scalars are lowercase.
For any matrix $\mathbf{M} \in \F^{m \times n}$, we  denote the $(i,j)$-th entry of $\mathbf{M}$ by $\mathbf{M}_{i,j}$.
We let $\jcol{\mathbf{M}}{j}$ denote the $j^\text{th}$ column of $\mathbf{M}$, and $\jrow{\mathbf{M}}{i}$ denote the $i^\text{th}$ row.
For notational convenience, we occasionally use the conventional Golub and Van Loan notation, e.g., $\mathbf{M}(:,j)$ vs. $\mathbf{M}_{:,j}$.
Given a matrix $\mathbf{M} \in \F^{m \times n}$,
we denote its
Frobenius norm by $\| \mathbf{M} \|_F$, so that
$$ \| \mathbf{M} \|_F = \sqrt{\sum_{i=1}^m \sum_{j=1}^n |\mathbf{M}_{i,j}|^2}.$$

We let $\mathbf{M}^*$ denote the conjugate transpose of $\mathbf{M}$ and $\mathbf{M}^\dagger$ its Moore-Penrose pseudoinverse.
We say a matrix $\mathbf{U}$ is \textit{orthonormal} if its columns are orthonormal so that $\mathbf{U}^* \mathbf{U} = \mathbf{I}$, the identity matrix.
The vector of all zeros is denoted by $\mathbf{0}$.

The abbreviation \textit{i.i.d.} stands for ``independent and identically distributed'' to describe random variables.
The probability of a random event is denoted by $\mathbb{P}[\cdot]$, and the expectation of a random variable is denoted by $\mathbb{E}[\cdot]$.
A random vector $\mathbf{x}$ is isotropic if $\mathbb{E}[\mathbf{x} \mathbf{x}^*
    ] = {\mathbf{I}}$.

\vspace{2mm}

\subsection{The Singular Value Decomposition}
\label{sec:svd}

Every matrix $\mathbf{A} \in \F^{m \times n}$ admits a {singular value decomposition} (SVD), given by $\mathbf{A} = \mathbf{U} \boldsymbol{\Sigma} \mathbf{V}^*$, matrices $\mathbf{U}$ and $\mathbf{V}$ are orthonormal, and $\boldsymbol{\Sigma}$ is diagonal.
The columns $\{ \mathbf{u}_i \}_{i=1}^{d}$ and $\{\mathbf{v}_{i}\}_{i=1}^{d}$ of $\mathbf{U}$ and $\mathbf{V}$ are called the left and right singular vectors of $\mathbf{A}$, where $d = \min(m,n)$.
The diagonal elements $\{ \sigma_i\}_{i=1}^{d}$ of $\boldsymbol{\Sigma}$ are the singular values of $\mathbf{A}$, ordered so that $\sigma_1 \geq \sigma_2 \geq \cdots \geq \sigma_d \geq 0$.
A rank-$k$ truncated SVD is given by $\mathbf{A}_k = \sum_{i=1}^k \sigma_i \mathbf{u}_i \mathbf{v}_i^*$.
By the Eckart-Young theorem \cite{eckart1936}, $\mathbf{A}_k$ gives the best rank-$k$ approximation of $\mathbf{A}$, with error
$$\| \mathbf{A} - \mathbf{A}_k\|_F^2 = \sum_{j = k+1}^{d} \sigma_j^2 .$$

\subsection{The Generalized Nystr\"om Decomposition}
\label{sec:N-GN-approx}
The \GN{} decomposition is an extension of the \Ny{} decomposition \cite{Gittens2016} to arbitrary matrices \cite{nakatsukasaFastStableRandomized2020}.
Given $\mathbf{A} \in \F^{m \times n}$, let $\mathbf{X} \in \F^{n \times r}$ and $\mathbf{Y} \in \F^{m \times s}$ be any two matrices (though frequently they are drawn from random matrix distributions).
The \GNa{} is given by
\begin{align}
    \label{eq:genNys}
    \mathbf{A} \langle \mathbf{X}, \mathbf{Y} \rangle \equiv \mathbf{A}_{\textup{GN}} = \left (\mathbf{A} \mathbf{X} \right ) \left ( \mathbf{Y}^* \mathbf{A} \mathbf{X} \right )^\dag \left ( \mathbf{Y}^* \mathbf{A} \right ).
\end{align}
The number of columns, $r$ and $s$, in the matrices $\mathbf{X}$ and $\mathbf{Y}$ need not be equal.
In fact, it has been suggested that introducing a discrepancy between $r$ and $s$ can enhance the accuracy of the \GNa~\cite{nakatsukasaFastStableRandomized2020}.
Throughout our work, we refer to the case where $r = s$ as the non-discrepant case, and the case where $s\geq r$ as the discrepant case.
We denote the core matrix $\mathbf{Y}^* \mathbf{A} \mathbf{X}$ by $\mathbf{H}$, which we assume is full-rank.

\subsection{The CUR Decomposition}
\label{sec:CUR}

The CUR decomposition is also frequently used for low-rank approximation of arbitrary $\mathbf{A} \in \F^{m \times n}$, denoted by
\begin{align}
    \label{eq:CUR}
    \curX = \underset{m \times r}{\mathbf{C}} \underset{r \times s}{\hspace{2mm} \mathbf{U} \hspace{2mm}} \underset{s \times n}{\mathbf{R}},
\end{align}
where $\mathbf{R} = \mathbf{A}(\mathcal{I},:)$ for $s$ linearly independent rows indexed by $\mathcal{I}$, and $\mathbf{C} = \mathbf{A}(:,\mathcal{J})$ for $r$ linearly independent columns indexed by $\mathcal{J}$.
The columns of $\mathbf{C}$ or rows of $\mathbf{R}$ are known as skeletons.
If $\mathbf{U} = \mathbf{C}^\dag \mathbf{A} \mathbf{R}^\dag$, then the decomposition in (\ref{eq:CUR}) is known as the CUR of ``best approximation'' (cf. \cite{ParkNakatsukasa25}), as this choice of $\mathbf{U}$ minimizes the Frobenius norm error for given $\mathbf{C}$ and $\mathbf{R}$.

However, in practice, it is prohibitively expensive, or impossible in some settings, especially when only entrywise access to the matrix is available, to access the full matrix $\mathbf{A}$ for  $\mathbf{U} = \mathbf{C}^\dag \mathbf{A} \mathbf{R}^\dag$.
Instead, we compute $\mathbf{U} = \mathbf{A}(\mathcal{I},\mathcal{J})^\dag$ to form the CUR decomposition in sublinear-time, with no additional matrix access beyond the skeletons; this is the decomposition computed in our work.

Unfortunately, $\mathbf{A}(\mathcal{I},\mathcal{J})^{\dag}$ can be highly ill-conditioned.
As such, the skeletons $\mathcal{I}$ and $\mathcal{J}$ should always be computed conditionally upon one another:
without loss of generality, given $\mathbf{C} = \mathbf{A}(:,\mathcal{J})$, the row skeletons should be computed using $\mathbf{C}$, not the full matrix $\mathbf{A}$.

\section{Leave-Right-Out Error Estimators}
With an understanding of the different low-rank approximation methods relevant to this work, we now review previous work on estimating the error  of Generalized Nystr\"om without additional matrix accesses, known as the leave-right-out estimator (LRO) \cite{LazPearPrit26}. We then follow this presentation with a description of how, despite the relationship between Generalized Nystr\"om and CUR, the LRO estimator does not extend to CUR.

\subsection{Leave-Right-Out for Generalized Nystr\"om}\label{subsec:LRO_GN}

Once a generalized \Ny{} approximation $\gnX$ of a given matrix $\mathbf{A}$ is obtained, computing the error $\|\mathbf{A} - \gnX \|_F$ is often impractical or infeasible.
For example, in matrix-free or streaming environments, approximating the error from the information used to compute the approximation is especially desirable.

We can approximate $\| \mathbf{A} - \gnX \|_F$ with the LRO error estimation technique of \cite{LazPearPrit26}, that is a generalization of the leave-one-out estimator presented in \cite{EpperlyTropp_VarianceEst2024} for the \Ny{} approximation.
In the LRO approach, for each of the $r$ columns of $\mathbf{X}$, we evaluate the error on the $i^\text{th}$ column of the rank-$(r-1)$ approximation obtained from excluding the $i^\text{th}$ column.
If implemented as described, this method would also be extremely costly computationally, since each error estimate with $r-1$ vectors requires an additional \GN{} approximation.
Instead, a fast LRO formula is derived for which no additional \GN{} approximations or matrix-vector products with $\mathbf{A}$ are needed beyond what is required to compute $\gnX$.

Suppose the core matrix $\mathbf{H}$ has full column rank.
Then we can estimate the error of the \GN{} approximation by leaving out a column $\mathbf{x}_\ell$ of $\mathbf{X}$ and summing over all indices; that is,
\begin{equation}
    \label{eq:NaiveLRO}
    \text{LRO} := \frac{1}{\sqrt{r}} \left( \sum_{\ell=1}^{r}  \left\Vert (\mathbf{A} - \njcol{(\mathbf{\gnX})}{\ell} ) \mathbf{x}_\ell \right\Vert^2 \right)^{\frac{1}{2}},
\end{equation}
where $\njcol{(\mathbf{\gnX})}{\ell}$ denotes the rank-$(r-1)$ generalized Nystr\"om approximation in which the $\ell$th column of $\mathbf{X}$ has been removed.
The LRO formula as in \eqref{eq:NaiveLRO} is  impractical, but in \cite{LazPearPrit26}, the following fast formula is derived:
\begin{equation}
    \label{eq:replicateLRO}
    \text{LRO} = \frac{1}{\sqrt{r}} \left( \sum_{\ell=1}^{r}  \left\Vert \mathbf{R_X} \frac{\jcol{(\mathbf{H}^*\mathbf{H})^{-1}}{\ell}}{\jk{(\mathbf{H}^*\mathbf{H})^{-1}}{\ell}{\ell}} \right\Vert^2 \right)^{\frac{1}{2}}
\end{equation}
where, $\mathbf{R_X}$ is the $R$-factor of the $QR$-factorization of $\mathbf{A}\mathbf{X}$, i.e., $[ \mathbf{Q}_\mathbf{X}, \; \mathbf{R}_\mathbf{X}] = \texttt{qr}(\mathbf{A}\mathbf{X})$. It is important to emphasize that, beyond the initial matrix vector products to form the approximation, at no point in this computation is the matrix $\mathbf{A}$ accessed. Yet, despite there being no additional accesses to the matrix $\mathbf{A}$, \cite{LazPearPrit26} showed experimentally that the LRO provides high-quality estimates of the true approximation error. These results align with the success of a similar approach proposed in \cite{EpperlyTropp_VarianceEst2024} for the \Ny{} approximation.

\subsection{Leave-Right-Out for CUR}
\label{subsec:LRO-cur}
\begin{figure}[h]
    \centering
    \begin{subfigure}{0.48\textwidth}
        \centering
        \scalebox{0.7}{
\begin{tikzpicture}
    \begin{axis}[
            width=\textwidth,
            height=8cm,
            xlabel={Rank},
            ymode=log,
            grid=both,
            legend to name={dummy},  
        ]

        \addplot[
            color=gray,
            solid,
            line width=1pt,
            mark=o,
            mark options={solid, scale = 0.7, fill = black},
        ]
        table[
                x=s_vec,
                y=errSVD,
                col sep=comma
            ]{csv-paper/lro-work/summary.csv};
        \addlegendentry{Spectrum}

        \addplot[
            color=oxfordnavy,
            solid,
            line width=1.5pt,
            mark=o,
            mark options={fill=white, scale = 2}
        ]
        table[
                x=s_vec,
                y=errCUR,
                col sep=comma
            ]{csv-paper/lro-work/summary.csv};
        \addlegendentry{CUR error}

        \addplot[
            color=leafgreen,
            dashed,
            line width=1.5pt,
            mark=x,
            mark options={solid, scale=2},
        ]
        table[
                x=s_vec,
                y=LRO,
                col sep=comma
            ]{csv-paper/lro-work/summary.csv};
        \addlegendentry{Estimate}

    \end{axis}
\end{tikzpicture}
}
        \caption{Exponential decay.}
        \label{fig:exp-decay-loo}
    \end{subfigure}
    \hfill
    \begin{subfigure}{0.48\textwidth}
        \centering
        \scalebox{0.7}{
\begin{tikzpicture}
    \begin{axis}[
            width=\textwidth,
            height=8cm,
            xlabel={Rank},
            ymode=log,
            grid=both,
            legend to name={dummy},  
        ]

        \addplot[
            color=gray,
            solid,
            line width=1pt,
            mark=o,
            mark options={solid, scale = 0.7, fill = black},
        ]
        table[
                x=s_vec,
                y=errSVD,
                col sep=comma
            ]{csv-paper/lro-fail/summary.csv};
        \addlegendentry{Spectrum}

        \addplot[
            color=oxfordnavy,
            solid,
            line width=1.5pt,
            mark=o,
            mark options={fill=white, scale = 2}
        ]
        table[
                x=s_vec,
                y=errCUR,
                col sep=comma
            ]{csv-paper/lro-fail/summary.csv};
        \addlegendentry{CUR error}

        \addplot[
            color=leafgreen,
            dashed,
            line width=1.5pt,
            mark=x,
            mark options={solid, scale=2},
        ]
        table[
                x=s_vec,
                y=LRO,
                col sep=comma
            ]{csv-paper/lro-fail/summary.csv};
        \addlegendentry{Estimate}

    \end{axis}
\end{tikzpicture}
}
        \caption{Frankenstein.}
        \label{fig:frankestein-loo}
    \end{subfigure}
    \caption{Estimator bias when the LRO formula of Section~\ref{subsec:LRO_GN} is directly extended to CUR approximations. \textit{Left:} In simple cases, such as when $A$ has exponentially decaying singular values, the LRO formula can provide a good estimate. \textit{Right:} For more difficult cases, such as the ``Frankenstein'' matrix, the bias of the formula prevents a good estimate. The ``Frankenstein'' input matrix is constructed as an $n \times n$ block diagonal matrix, where the $n/4 \times n/4$ blocks are the adversarial \texttt{`cauchy'}, \texttt{`golub'}, \texttt{`randcorr'}, and \texttt{`lotkin'} test matrices from JULIA's \texttt{MatrixDepot}. }
    \label{fig:CUR_LRO_frankenstein}
\end{figure}

Because we can view the CUR decomposition as a special case of the \GN{} decomposition with canonical basis vectors that are a dependent subset of the canonical matrix, it is natural to ask whether an LRO estimator of the form (\ref{eq:replicateLRO}) may be derived for CUR as well.
Let $\mathcal{I}$ be the indices of the $s$ row skeletons and $\mathcal{J}$ be the indices of the $r$ column skeletons.
Now, in the CUR approximation, $\mathbf{X} \in \F^{n \times r}$ and $\mathbf{Y} \in \F^{m \times s}$ are  sub-matrices of the identity which select columns $\mathcal{J}$ and rows $\mathcal{I}$, respectively, so that $\mathbf{A} \mathbf{X} = \mathbf{C}:= \mathbf{A}(:,\mathcal{J})$ and $\mathbf{Y}^*\mathbf{A} = \mathbf{R} := \mathbf{A}(\mathcal{I},:)$.
Then, it would seem that we can directly apply the LRO estimator for \GN{} of the previous section to obtain an LRO estimator for CUR.

Unfortunately, the resulting estimator is not as reliable, as it is for \GN{}.
A probable cause of this loss in reliability is that for the LRO estimator, the columns of matrices $\mathbf{X}$ and $\mathbf{Y}$ are independent isotropic random vectors, cf. the Girard-Hutchinson trace estimator in \cite[Section 4.8]{martinssonRandomizedNumericalLinear2020a} or the leave-one-out \Ny{} error estimator in \cite{EpperlyTropp_VarianceEst2024}. Unfortunately, because CUR uses a subset of the columns and rows, the independence assumption for the random vectors no longer holds, as even when uniform sampling is performed, it requires that each column be selected without replacement. The effect of this dependency on the error calculation can be stronger or weaker based on the underlying matrix as is visualized in Figure~\ref{fig:CUR_LRO_frankenstein}.
For certain matrices, such as ones with exponentially decaying singular values, the estimator can still provide good estimates.
However, for more adversarial matrices, the estimator can completely fail.
For this reason, in the following section, we derive an alternative blind estimator.

\section{Blind error estimator}
Given the difficulties the LRO approach faces in the CUR setting, we propose here an alternative estimator built on a simple but effective observation: a few columns of the residual matrix $\mathbf{A} - \curX$ often suffice to accurately capture the magnitude of the full residual.
This suggests estimating the CUR approximation error by evaluating the residual on a small number of additional columns, not used in the construction of the approximation itself. Motivated by the Horvitz-Thompson estimators \cite{horvitz1952generalization}, an estimator in statistics commonly used in survey sampling to correct for missing data \cite{sarndal1992model}. Our estimator can accurately estimate the error with a small number of extra columns, and, as we will show, in its squared form is an unbiased estimator of the squared Frobenius error whose variance can be explicitly characterized in terms of both the extra column selection strategy and the quality of the CUR approximation.

Formally, given a rank-$r$ CUR approximation with selected row and column indices $\mathcal{I}$ and $\mathcal{J}$,
\begin{equation}
    \curX = \mathbf{A}(:,\mathcal{J}) \mathbf{A}(\mathcal{I},\mathcal{J})^\dag
    \mathbf{A}(\mathcal{I},:),
\end{equation}
we select $q$ extra columns indexed by $\mathcal{J}_q \subset \{1, \dots, n\} \setminus \mathcal{J}$. Then, defining $\mathbf{S}_q$ as the scaled submatrix of the identity matrix indexed by $\mathcal{J}_q$, we estimate the CUR error as
\begin{equation}
    \label{eq:estimator}
    \| \mathbf{A} - \curX\|_F \approx \|( \mathbf
    {A}-\curX) \mathbf{S}_q\|_F.
\end{equation}
The scalings in $\mathbf{S}_q$ will depend on the probability distribution from which we sample the indices in $\mathcal{J}_q$.
As illustrated in Figure \ref{fig:ill-estimate}, this is equivalent to computing the residual only on the selected (blue) extra columns.
Moreover, the interpolatory properties of the CUR approximation allow us to compute this estimate directly on the extra columns of the Schur complement, further reducing the number of components involved.
\begin{figure}[h]
    \centering
    \definecolor{leafgreen}{HTML}{3DAA6E}
\definecolor{oxfordnavy}{HTML}{1B2A4A}
\definecolor{signalorange}{HTML}{E8650A}
\definecolor{extracolblue}{HTML}{4A7FA5}   
\usetikzlibrary{calc}

\scalebox{0.7}{
\resizebox{\textwidth}{!}{%
  \begin{tikzpicture}
    \begin{scope}[xshift= 1.5cm, x=1cm, y=1cm, line cap=round, line join=round, scale=0.75, transform shape]
      \draw[very thick] (0.3,0.7) rectangle (4.3,4.1);
      \foreach \y in {3.7,3.45,3.2,2.05} {
          \fill[leafgreen] (0.3,\y-0.055) rectangle (4.3,\y+0.055);
        }
      \foreach \x in {0.78,1.05,1.62,3.23} {
          \fill[signalorange] (\x-0.055,0.7) rectangle (\x+0.055,4.1);
        }
      \foreach \x in {1.98,4.03} {
          \fill[extracolblue] (\x-0.055,0.7) rectangle (\x+0.055,4.1);
        }
      \foreach \x in {0.78,1.05,1.62,3.23} {
          \foreach \y in {3.70,3.45,3.20,2.05} {
              \fill[oxfordnavy]
              ($(\x,\y)+(-0.05,-0.05)$) rectangle ($(\x,\y)+(0.05,0.05)$);
            }
        }
      \foreach \x in {1.98,4.03} {
          \foreach \y in {3.70,3.45,3.20,2.05} {
              \fill[signalorange!40]
              ($(\x,\y)+(-0.05,-0.05)$) rectangle ($(\x,\y)+(0.05,0.05)$);
            }
        }
    \end{scope}
    \begin{scope}[xshift=5cm, x=1cm, y=1cm, line cap=round, line join=round, scale=0.6, transform shape]
      \node[anchor=west, font=\Huge] at (1.25,2.45)
      {$\|\mathbf{A-\curX}\|_F^2 \approx c$};
      \foreach \x in {9.1,9.3} {
          \fill[extracolblue] (\x-0.06,0.8) rectangle (\x+0.06,4.1);
        }
      \foreach \x in {11.1,11.3,11.5,11.7} {
          \fill[signalorange] (\x-0.06,0.8) rectangle (\x+0.06,4.1);
        }
      \node[font=\bfseries\Huge] at (10.2,2.45) {$-$};
      \node[font=\bfseries\Huge] at (8.5,2.45) {$\Big\Vert$};
      \foreach \i in {0,1,2,3} {
          \foreach \j in {0,1,2,3} {
              \fill[oxfordnavy]
              (12.5+0.18*\i,4.1-0.12-0.18*\j) rectangle +(0.12,0.12);
            }
        }
      \node[text=oxfordnavy,font=\bfseries\Huge] at (13.6,4.18) {$\dag$};
      \foreach \i in {0,1} {
          \foreach \j in {0,1,2,3} {
              \fill[signalorange!40]
              (14.4+0.18*\i,4.1-0.12-0.18*\j) rectangle +(0.12,0.12);
            }
        }
      \node[font=\bfseries\Huge] at (15.6,2.45) {$\Big\Vert_F^2$};
    \end{scope}
  \end{tikzpicture}%
}
}
    \caption{Illustration of the column-sampling estimator. \emph{Left:} the full
        matrix $\mathbf{A}$; green and red bars indicate the rows and columns selected by the CUR
        approximation, the violet squares their intersection; blue bars indicate the $q$ extra columns, and the yellow
        squares mark their intersections with the selected rows. \emph{Right:} we evaluate the residual only on the extra columns. Note: $c$ is a scaling factor, and for uniformly sampled extra columns, $c=\frac{n-r}{q}$.}
    \label{fig:ill-estimate}
\end{figure}
\paragraph{Choice of extra columns.}
A crucial component of the estimator is the choice of the extra columns. In particular, the quality of the estimator depends heavily on how this choice is made.
The available options depend largely on what is known about the problem: if no prior information is at hand, the only possibility is to sample the extra columns uniformly at random from those not already selected.
However, in most applications where matrix-vector products with $\mathbf{A}$ are unavailable and only part of the matrix is accessible (for example, when designing an experiment), some data-driven information is typically available \cite{meier2026reducingacquisitiontimeradiation}.
In such cases, one can improve the reliability of the estimator by exploiting this information to make a more informed selection of extra columns.

To keep the presentation general, we prove results for arbitrary probability weights on the extra columns, and present experiments with uniformly sampled extra columns to illustrate the behavior of the estimator in the limiting case where the only available information comes from the observed entries of $\mathbf{A}$.

\paragraph{Number of extra columns.}
Naturally, the quality of the estimate depends on the number $q$ of extra columns.
It is therefore important to understand how $q$ should scale with the number of rows of $\mathbf{A}$ and its rank.
To this end, we include two numerical experiments showing that the variance of the estimator \eqref{eq:estimator} relative to the true CUR approximation error is not strongly affected by increasing the number of rows at a fixed or increasing rank.
Indeed, Figure~\ref{fig:numer_q} shows that a small value of $q$, is sufficient for obtaining a reliable estimate, even for for matrices with large numbers of rows or large rank.

The reliability of the estimator is further examined in Section~\ref{sec:experiments}, where we assess its accuracy and discuss potential failure modes.
First, we present theoretical analysis along with an auxiliary strategy for assessing the quality of the estimate in practice.
\begin{figure}
    \centering
    \begin{subfigure}{0.48\textwidth}
        \centering
        \pgfplotsset{
    colormap={navygreen}{
            rgb255(0pt)=(27,42,74)       
            rgb255(33pt)=(15,110,86)     
            rgb255(66pt)=(61,170,110)    
            rgb255(100pt)=(232,101,10)   
            rgb255(133pt)=(253,246,227)  
        }
}

\scalebox{0.7}{
    \resizebox{\textwidth}{!}{%
        \begin{tikzpicture}
            \begin{axis}[
                    width=\textwidth,
                    height=\textwidth,
                    xlabel={rows of $A$},
                    ylabel={Extra columns},
                    xtick={1,2,...,10},
                    xticklabels={20,40,80,160,320,640,1280,2560,5120,10240},
                    xticklabel style={rotate=45, anchor=east, font=\small},
                    ytick={1,2,...,10},
                    yticklabels={1,2,4,8,16,32,64,128,256,512},
                    tick align=outside,
                    grid=both,
                    grid style={line width=0.3pt, draw=gray!30},
                    colorbar,
                    colorbar style={
                            width=0.2cm,
                        },
                    colormap name=navygreen,
                    point meta min=-9,
                    point meta max=-3,
                ]
                \addplot[
                    matrix plot*,
                    mesh/cols=10,
                    point meta=explicit,
                ] table [
                        x=x,
                        y=y,
                        meta=logval,
                        col sep=comma,
                    ] {csv-paper/extra-col-rel-var/summary.csv};
            \end{axis}
        \end{tikzpicture}
    }}
        \caption{Fixed rank.}
    \end{subfigure}
    \hfill
    \begin{subfigure}{0.48\textwidth}
        \centering
        \pgfplotsset{
    colormap={navygreen}{
            rgb255(0pt)=(27,42,74)       
            rgb255(33pt)=(15,110,86)     
            rgb255(66pt)=(61,170,110)    
            rgb255(100pt)=(232,101,10)   
            rgb255(133pt)=(253,246,227)  
        }
}

\scalebox{0.7}{
    \resizebox{\textwidth}{!}{%
        \begin{tikzpicture}
            \begin{axis}[
                    width=\textwidth,
                    height=\textwidth,
                    ylabel={Extra columns},
                    xlabel={rows of $A$},
                    xtick={1,2,...,10},
                    xticklabels={20,40,80,160,320,640,1280,2560,5120,10240},
                    xticklabel style={rotate=45, anchor=east, font=\small},
                    ytick={1,2,...,10},
                    yticklabels={1,2,4,8,16,32,64,128,256,512},
                    tick align=outside,
                    grid=both,
                    grid style={line width=0.3pt, draw=gray!30},
                    colorbar,
                    colorbar style={
                            width=0.2cm,
                        },
                    colormap name=navygreen,
                    point meta min=-9,
                    point meta max=-3,
                ]
                \addplot[
                    matrix plot*,
                    mesh/cols=10,
                    point meta=explicit,
                ] table [
                        x=x,
                        y=y,
                        meta=logval,
                        col sep=comma,
                    ] {csv-paper/extra-col-rel-var-increase-rank/summary.csv};
            \end{axis}
        \end{tikzpicture}
    }}
        \caption{Increased rank.}
    \end{subfigure}
    \caption{Relative variance of the estimate as the number of extra columns for the estimator as well as the total number of rows of a Chan matrix \cite{Chan1987} is increased.
        With the total number of columns of $\mathbf{A}$ fixed, each sub-figure illustrates the variance of the estimator for a matrix $\mathbf{A}$ of fixed rank (\textit{left}) and increasing rank (\textit{right}).}
    \label{fig:numer_q}
\end{figure}
\subsection{Analysis}
\label{sec:analysis}
In this subsection, we provide some theoretical results on the quality of the estimator. To do this we begin by analyzing the squared form of our estimator, because that form aligns with the Horvitz-Thompson estimator \cite{horvitz1952generalization}. Then, we use the $\delta$-method \cite{oehlert1992note} to make statements about our estimator.
We start by proving that the square of our estimator is an unbiased estimate of the squared CUR approximation error.
We then proceed by analyzing the variance of the squared quantity, relating it to the probability distribution used for selecting the extra columns and to the quality of the CUR approximation. For simplicity all of our expectations are conditional on choosing an $\curX$, i.e. $\mathbb{E}[\cdot] = \mathbb{E}[\cdot | \curX]$

Our first theorem establishes that the proposed estimator squared is unbiased for the squared approximation error.
\begin{theorem} \label{thm:unbiased}
    Let $\curX$ be a rank-$r$ CUR approximation of an $m\times n$ matrix $\mathbf{A}$ with selected row and column indices $\mathcal{I}$ and $\mathcal{J}$, and let $r<n$.
    Let $\mathcal{J}_q\subset\{1,\dots,n\}\setminus\mathcal{J}$ be the set of $q$ extra columns, and define
    $\mathcal{S}:=\{\ell:\|(\mathbf{A}-\curX)\mathbf{e}_{\ell}\|_2>0\}$.
    Assume $p_{\ell}:=\mathbb{P}(\ell\in\mathcal{J}_q)>0$ for every $\ell\in\mathcal{S}$, and set $w_{\ell}=p_{\ell}^{-1/2}$. Define $\mathbf{S}_q=\mathbf{W}\mathbf{I}(:,\mathcal{J}_q)$ as the submatrix of the identity matrix indexed by $\mathcal{J}_q$, scaled by the diagonal matrix $\mathbf{W}$ with entries $w_{\ell}$.
    Then
    \begin{equation}
        \mathbb{E}[\|(\mathbf{A}-\curX)\mathbf{S}_q\|_F^2]=\|\mathbf{A}-\curX\|_F^2.
    \end{equation}
\end{theorem}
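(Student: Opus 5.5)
The plan is to write the sketched squared Frobenius norm as a sum of indicator-weighted column contributions and take expectations term by term. This is the Horvitz--Thompson argument.

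Set $\mathbf{E}:=\mathbf{A}-\curX$ and $\mathbf{e}_\ell$ the $\ell$th standard basis vector. Since $\mathbf{S}_q=\mathbf{W}\mathbf{I}(:,\mathcal{J}_q)$, the columns of $\mathbf{E}\mathbf{S}_q$ are exactly $w_\ell \mathbf{E}\mathbf{e}_\ell$ for $\ell\in\mathcal{J}_q$. Here we interpret $\mathbf{W}$ as acting with weight $w_\ell$ on index $\ell$; for $\ell\notin\mathcal{S}$ with $p_\ell=0$ the weight is irrelevant because the column is zero, and we may set it to $0$. Hence
\[
\|\mathbf{E}\mathbf{S}_q\|_F^2=\sum_{\ell\in\mathcal{J}_q} w_\ell^2\|\mathbf{E}\mathbf{e}_\ell\|_2^2=\sum_{\ell\notin\mathcal{J}} \mathbf{1}[\ell\in\mathcal{J}_q]\, w_\ell^2\|\mathbf{E}\mathbf{e}_\ell\|_2^2 .
\]
This rewrites a sum over a random index set as a deterministic sum with random indicators. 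It requires $\mathcal{J}_q$ to be a set, i.e.\ sampling without replacement, which the statement assumes.

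Taking the expectation conditional on $\curX$ and using linearity, $\mathbb{E}\,\mathbf{1}[\ell\in\mathcal{J}_q]=p_\ell$. For $\ell\in\mathcal{S}$ we have $p_\ell>0$, so $p_\ell w_\ell^2=1$. For $\ell\notin\mathcal{S}$ the summand vanishes regardless of $p_\ell$. Therefore
\[
\mathbb{E}\|\mathbf{E}\mathbf{S}_q\|_F^2=\sum_{\ell\in\mathcal{S}\setminus\mathcal{J}}\|\mathbf{E}\mathbf{e}_\ell\|_2^2 .
\]

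The remaining step is the interpolation property of CUR: the residual vanishes on the skeleton columns, i.e.\ $\mathbf{E}(:,\mathcal{J})=\mathbf{0}$, so $\mathcal{S}\cap\mathcal{J}=\emptyset$. Indeed,
\[
\curX(:,\mathcal{J})=\mathbf{C}\,\mathbf{A}(\mathcal{I},\mathcal{J})^\dagger\mathbf{A}(\mathcal{I},\mathcal{J}).
\]
Under the standing assumption that the $r$ columns and $s\ge r$ rows are chosen so that $\mathbf{A}(\mathcal{I},\mathcal{J})$ has full column rank $r$ (the ``rank-$r$'' hypothesis, with $\mathbf{H}$ full rank as in Section~\ref{sec:N-GN-approx}), we get $\mathbf{A}(\mathcal{I},\mathcal{J})^\dagger\mathbf{A}(\mathcal{I},\mathcal{J})=\mathbf{I}_r$, hence $\curX(:,\mathcal{J})=\mathbf{C}=\mathbf{A}(:,\mathcal{J})$. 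Consequently $\sum_{\ell\in\mathcal{S}\setminus\mathcal{J}}\|\mathbf{E}\mathbf{e}_\ell\|_2^2=\sum_{\ell=1}^n\|\mathbf{E}\mathbf{e}_\ell\|_2^2=\|\mathbf{E}\|_F^2$, which is the claim.

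The main obstacle is this interpolation step: it is exactly what justifies excluding $\mathcal{J}$ from the sampling set without introducing bias. I would state the full-column-rank requirement on the core $\mathbf{A}(\mathcal{I},\mathcal{J})$ explicitly, since it is the hypothesis the whole argument relies on. The rest is routine linearity of expectation.
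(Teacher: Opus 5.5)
Your proof is correct and is essentially the paper's argument: rewrite $\|(\mathbf{A}-\curX)\mathbf{S}_q\|_F^2$ as an indicator-weighted sum over columns, then apply linearity of expectation with $p_\ell w_\ell^2=1$ on $\mathcal{S}$. The interpolation step you call the main obstacle is not actually needed: $\mathcal{J}_q\cap\mathcal{J}=\emptyset$ gives $p_\ell=0$ for $\ell\in\mathcal{J}$, so the hypothesis $p_\ell>0$ on $\mathcal{S}$ already forces $\mathcal{S}\cap\mathcal{J}=\emptyset$, and your full-column-rank observation on $\mathbf{A}(\mathcal{I},\mathcal{J})$ (which does follow from $\curX$ having rank exactly $r$) only explains why that hypothesis can be satisfied.
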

\begin{proof}
    Since residual columns outside $\mathcal{S}$ vanish,
    \begin{equation*}
        \mathbb{E}[\|(\mathbf{A}-\curX)\mathbf{S}_q\|_F^2]
        =\sum_{\ell\in\mathcal{S}}p_{\ell}w_{\ell}^2\|(\mathbf{A}-\curX)\mathbf{e}_{\ell}\|_2^2
        =\sum_{\ell\in\mathcal{S}}\|(\mathbf{A}-\curX)\mathbf{e}_{\ell}\|_2^2
        =\|\mathbf{A}-\curX\|_F^2.
    \end{equation*}
\end{proof}

We now study the variance of the estimates. To do so, we recall that all bounded random variables are sub-Gaussian (SG).
\begin{lemma}
    \label{lemma:boundedvar}
    \cite{Wainwright} If $Z$ is a random variable with $\mathbb{E}[Z] = \mu \neq 0$ and $\frac{Z - \mu}{\mu}$ takes values in $[z_1, z_2]$, then
    \begin{equation*}
        \frac{Z - \mu}{\mu} \sim \mathrm{SG}\!\left(\frac{z_2 - z_1}{2}\right).
    \end{equation*}
\end{lemma}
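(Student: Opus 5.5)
The plan is to reduce this lemma to Hoeffding's lemma for bounded, centred random variables, which is the standard route by which \cite{Wainwright} shows that bounded variables are sub-Gaussian. I would set $X := (Z-\mu)/\mu$. Linearity of expectation and $\mu \neq 0$ give $\mathbb{E}[X] = 0$. By hypothesis $X \in [z_1, z_2]$ almost surely, and because $X$ is centred we also get $z_1 \le 0 \le z_2$. The claim $X \sim \mathrm{SG}((z_2-z_1)/2)$ then means
\[
\mathbb{E}[e^{\lambda X}] \le \exp\!\left(\tfrac{\lambda^2 (z_2-z_1)^2}{8}\right) \quad \text{for all } \lambda \in \mathbb{R},
\]
which is exactly Hoeffding's bound with parameter $\sigma = (z_2 - z_1)/2$. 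The whole proof therefore amounts to establishing this inequality for a centred variable supported on $[z_1, z_2]$.

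To prove the inequality I would use the convexity argument. Since $x \mapsto e^{\lambda x}$ is convex, on $[z_1, z_2]$ it lies below its chord:
\[
e^{\lambda x} \le \frac{z_2 - x}{z_2 - z_1} e^{\lambda z_1} + \frac{x - z_1}{z_2 - z_1} e^{\lambda z_2}.
\]
If $z_1 = z_2$ then $X \equiv 0$ and the bound is trivial, so I may assume $z_1 < z_2$. Taking expectations and using $\mathbb{E}[X] = 0$ gives $\mathbb{E}[e^{\lambda X}] \le e^{L(h)}$. Here $h = \lambda(z_2 - z_1)$, $\theta = -z_1/(z_2-z_1) \in [0,1]$, and $L(h) = -\theta h + \log(1 - \theta + \theta e^h)$.

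The remaining step is to show $L(h) \le h^2/8$, and this is the only real obstacle, though it is routine. A direct computation gives $L(0) = 0$ and $L'(0) = 0$. Writing $u = \theta e^h/(1-\theta+\theta e^h) \in [0,1]$, one finds $L''(h) = u(1-u) \le 1/4$. Taylor's theorem with Lagrange remainder then gives $L(h) \le h^2/8 = \lambda^2 (z_2-z_1)^2/8$, which completes the proof.

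Since the lemma is cited from \cite{Wainwright}, I could instead simply invoke Hoeffding's lemma there after the centring reduction in the first step. The only point that needs care is noting that dividing by $\mu \neq 0$ preserves boundedness and yields mean zero.
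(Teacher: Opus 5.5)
Your proof is correct. The paper gives no argument of its own: it cites \cite{Wainwright} for this standard fact, which is exactly Hoeffding's lemma applied to the centred variable $(Z-\mu)/\mu$. Your chord-plus-Taylor argument, with $L''(h)=u(1-u)\le 1/4$, establishes that bound with the stated parameter $(z_2-z_1)/2$. This constant matters: the quicker symmetrization argument often given for bounded variables only yields parameter $z_2-z_1$, so the Hoeffding-type argument you give is what the lemma's constant requires.
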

With a similar process to the one in \cite{pritchard2024solving}, we obtain the following result.
\begin{theorem}\label{thm:SG}
    Using the same notation as in Theorem~\ref{thm:unbiased}, let
    $\mathcal{S}:=\{\ell:\|(\mathbf{A}-\curX)\mathbf{e}_{\ell}\|_2>0\}$ denote the support of the nonzero residual columns.
    Assume that $p_{\ell}=\mathbb{P}(\ell\in\mathcal{J}_q)>0$ for every $\ell\in\mathcal{S}$, and set $w_{\ell}=p_{\ell}^{-1/2}$.
    Then
    \begin{equation}
        \frac{\| (\mathbf{A}-\curX)\mathbf{S}_q\|_F^2-\mathbb{E}[\|(\mathbf{A}-\curX)\mathbf{S}_q\|_F^2]}{\mathbb{E}[\|(\mathbf{A}-\curX)\mathbf{S}_q\|_F^2]}
        \sim \mathrm{SG}\!\left(\frac{\max_{\ell\in\mathcal{S}}w_{\ell}^2}{2}\right).
    \end{equation}
\end{theorem}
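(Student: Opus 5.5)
The plan is to apply Lemma~\ref{lemma:boundedvar} to $Z:=\|(\mathbf{A}-\curX)\mathbf{S}_q\|_F^2$. By Theorem~\ref{thm:unbiased}, $\mu=\mathbb{E}[Z]=\|\mathbf{A}-\curX\|_F^2$, and $\mu>0$ whenever $\mathcal{S}\neq\emptyset$. If $\mathcal{S}=\emptyset$ the residual vanishes, $Z\equiv 0$, and the statement is vacuous, so we assume $\mathcal{S}\neq\emptyset$ throughout. It then suffices to find an interval $[z_1,z_2]$ containing $(Z-\mu)/\mu$ almost surely with $z_2-z_1\le \max_{\ell\in\mathcal{S}}w_\ell^2$.

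Writing $a_\ell:=\|(\mathbf{A}-\curX)\mathbf{e}_\ell\|_2^2$, we have
\begin{equation*}
    Z=\sum_{\ell\in\mathcal{J}_q\cap\mathcal{S}}w_\ell^2 a_\ell .
\end{equation*}
For the lower end, $Z\ge 0$ gives $(Z-\mu)/\mu\ge -1=:z_1$. For the upper end, each selected index contributes at most $w_{\max}^2 a_\ell$, where $w_{\max}^2:=\max_{\ell\in\mathcal{S}}w_\ell^2$. Since $\mathcal{J}_q$ is a set, so that indices are drawn without replacement and none repeats, this yields
\begin{equation*}
    Z\le w_{\max}^2\sum_{\ell\in\mathcal{S}}a_\ell=w_{\max}^2\mu .
\end{equation*}
Hence $(Z-\mu)/\mu\le w_{\max}^2-1=:z_2$, and $z_2-z_1=w_{\max}^2$. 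Lemma~\ref{lemma:boundedvar} then gives sub-Gaussian parameter $w_{\max}^2/2$, which is the claimed statement.

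The main subtlety is the upper bound. It relies on no index being counted twice; under sampling with replacement one would instead need a multiplicity factor. It also uses $w_\ell^2=1/p_\ell\ge 1$, which guarantees $z_2\ge 0$ so the interval is nondegenerate, although this is not essential. Finally, we should check that the form of Lemma~\ref{lemma:boundedvar} being invoked (the Hoeffding-type bound for bounded variables) needs only almost-sure boundedness and a nonzero mean, both of which hold here. No independence across the selected indices is required, because we bound the whole sum directly rather than its individual terms.
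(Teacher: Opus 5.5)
Your proposal is correct and follows essentially the same route as the paper: identify $\mathbb{E}[Z]=\|\mathbf{A}-\curX\|_F^2$ via Theorem~\ref{thm:unbiased}, trap the normalized deviation in $[-1,\max_{\ell\in\mathcal{S}}w_\ell^2-1]$ using $Z\ge 0$ and the fact that each index of $\mathcal{J}_q$ appears at most once, then apply Lemma~\ref{lemma:boundedvar}. The paper leaves two points implicit that you make explicit: restricting the sum to $\mathcal{J}_q\cap\mathcal{S}$, and excluding $\mathcal{S}=\emptyset$. In that degenerate case the ratio is $0/0$, so the statement is undefined rather than vacuous.
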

\begin{proof}
    We begin by observing that
    \begin{equation*}
        \frac{\|(\mathbf{A}-\curX) \mathbf{S}_q\|_F^2 - \mathbb{E}\left[\|(\mathbf{A}-\curX) \mathbf{S}_q\|_F^2\right]}{\mathbb{E}\left[\|(\mathbf{A}-\curX)\mathbf{S}_q\|_F^2\right]}
        = \frac{\|(\mathbf{A}-\curX)\mathbf{S}_q\|_F^2}{\mathbb{E}\left[\|(\mathbf{A}-\curX)\mathbf{S}_q\|_F^2\right]} - 1.
    \end{equation*}
    To apply Lemma~\ref{lemma:boundedvar}, it suffices to find a lower bound on $\mathbb{E}\left[\|(\mathbf{A}-\curX)\mathbf{S}_q\|_F^2\right]$.
    Indeed, since residual columns outside $\mathcal{S}$ vanish,
    \begin{equation*}
        \mathbb{E}[\|(\mathbf{A}-\curX)\mathbf{S}_q\|_F^2]
        =\sum_{\ell\in\mathcal{S}}p_{\ell}w_{\ell}^2\|(\mathbf{A}-\curX)\mathbf{e}_{\ell}\|_2^2
        =\|\mathbf{A}-\curX\|_F^2.
    \end{equation*}
    Then,
    \begin{equation*}
        \|(\mathbf{A}-\curX)\mathbf{S}_q\|_F^2
        =\sum_{\ell\in\mathcal{J}_q}w_{\ell}^2\|(\mathbf{A}-\curX)\mathbf{e}_{\ell}\|_2^2
        \leq \left(\max_{\ell\in\mathcal{S}}w_{\ell}^2\right)\|\mathbf{A}-\curX\|_F^2.
    \end{equation*}
    Therefore,
    \begin{equation*}
        -1\leq \frac{\|(\mathbf{A}-\curX)\mathbf{S}_q\|_F^2}{\mathbb{E}[\|(\mathbf{A}-\curX)\mathbf{S}_q\|_F^2]}-1
        \leq \max_{\ell\in\mathcal{S}}w_{\ell}^2-1.
    \end{equation*}
    Applying Lemma~\ref{lemma:boundedvar} proves the result.
\end{proof}

This bound provides a reasonable intuition for how the distribution of the estimator behaves based on the chosen sampling distribution on the column indices and their corresponding weights. Unfortunately, the estimator's bounding distribution can be highly conservative in practice, depending on the difference in magnitude of the column norms. Therefore, when it comes to estimating the uncertainty of the estimator we rely on a classical statistical approach known as the bootstrap (see \cref{sec:boot}).

For uniform sampling without replacement, a more direct characterization of the variance of the square of estimator is available. Let $N=n-r$ be the number of eligible columns. For the estimator based on $q$ sampled columns,
\begin{equation}\label{eq:uniform-var}
    \operatorname{Var}\!\left(\|(\mathbf{A}-\curX)\mathbf{S}_q\|_F^2\right)=\frac{N-q}{q(N-1)}\left(N\sum_{j=1}^{N}\|(\mathbf{A}-\curX)\mathbf{e}_j\|_2^4-\left(\sum_{j=1}^{N}\|(\mathbf{A}-\curX)\mathbf{e}_j\|_2^2\right)^2\right).
\end{equation}
This expression makes the dependence on $q$ and on the concentration of the residual energy across columns explicit.

\paragraph{Worst-case}
We now discuss how it is impossible for any estimator that looks only at the columns $\mathbf{A}$ to be accurate with high-probability for worst-case matrices.
Indeed, the constraint imposed in this work, namely estimating the CUR error without global access to $\mathbf{A}$, implies that there exist cases where reliable estimation is not possible without additional information.
A representative example is the following.
Consider a matrix $\mathbf{A}$ constructed as a $2 \times 2$ block diagonal matrix,
where the first block has very low rank and unit norm,
and the second diagonal block contains a scalar with large enough magnitude.
If the computed CUR has not selected the column corresponding to this scalar,
the large isolated block will very likely be missed by the estimator as well,
leading to a systematic failure of any randomized estimator that is allowed to only inspect columns of $\mathbf{A}$. To show this, we rely on Yao's minimax principle.
\begin{lemma}[Yao's minimax principle]
    Let $\mathcal{A}$ be a class of deterministic algorithms, $\mathcal{R}$ the associated class of randomized algorithms, and $c(\operatorname{alg},\mathbf{A})$ a cost function. Then
    \begin{equation}
        \min_{R\in\mathcal{R}}\max_{\mathbf{A}}
        \mathbb{E}\left[c(R,\mathbf{A})\right]
        =
        \max_{\mathcal{D}}\min_{\operatorname{alg}\in\mathcal{A}}
        \mathbb{E}_{\mathbf{A}\sim\mathcal{D}}
        \left[c(\operatorname{alg},\mathbf{A})\right].
    \end{equation}
\end{lemma}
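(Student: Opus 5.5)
The plan is to treat the statement as an instance of von Neumann's minimax theorem for a finite two-player zero-sum game. The ``algorithm player'' picks a deterministic algorithm $\operatorname{alg}\in\mathcal{A}$ and the ``adversary'' picks an input $\mathbf{A}$, with payoff $c(\operatorname{alg},\mathbf{A})$. I will assume, as is implicit in the statement, that $\mathcal{A}$ and the set of inputs are finite. This is harmless in our setting: a column-querying estimator with a fixed budget of $q$ queries on a fixed, finite family of test matrices has only finitely many distinct behaviours. I will also identify $\mathcal{R}$ with the set of probability distributions $\pi$ over $\mathcal{A}$, so that $\mathbb{E}[c(R,\mathbf{A})]=\sum_{\operatorname{alg}}\pi(\operatorname{alg})\,c(\operatorname{alg},\mathbf{A})$. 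Likewise, a distribution $\mathcal{D}$ is a probability vector over the inputs.

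The first step is to rewrite both sides as optimizations over mixed strategies of the bilinear form
\[
\Phi(\pi,\mathcal{D})=\sum_{\operatorname{alg}}\sum_{\mathbf{A}}\pi(\operatorname{alg})\,\mathcal{D}(\mathbf{A})\,c(\operatorname{alg},\mathbf{A}).
\]
For fixed $\pi$, the map $\mathcal{D}\mapsto\Phi(\pi,\mathcal{D})$ is linear on the simplex, so its maximum is attained at a vertex, i.e.\ at a point mass on a single input. Hence $\max_{\mathbf{A}}\mathbb{E}[c(R,\mathbf{A})]=\max_{\mathcal{D}}\Phi(\pi,\mathcal{D})$. Symmetrically, for fixed $\mathcal{D}$, $\min_{\operatorname{alg}}\mathbb{E}_{\mathbf{A}\sim\mathcal{D}}[c(\operatorname{alg},\mathbf{A})]=\min_{\pi}\Phi(\pi,\mathcal{D})$. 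The claimed identity therefore becomes
\[
\min_{\pi}\max_{\mathcal{D}}\Phi(\pi,\mathcal{D})=\max_{\mathcal{D}}\min_{\pi}\Phi(\pi,\mathcal{D}).
\]

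The second step is to establish this equality. The inequality ``$\geq$'' is elementary: for any $\pi^\star$ and $\mathcal{D}^\star$,
\[
\max_{\mathcal{D}}\Phi(\pi^\star,\mathcal{D})\geq\Phi(\pi^\star,\mathcal{D}^\star)\geq\min_{\pi}\Phi(\pi,\mathcal{D}^\star).
\]
Taking the minimum over $\pi^\star$ on the left and the maximum over $\mathcal{D}^\star$ on the right gives it. This is also the only direction needed for the worst-case lower bound that follows. Exhibiting one hard input distribution $\mathcal{D}$ then lower-bounds the worst-case expected cost of every randomized estimator. In the application, $\mathcal{D}$ would be a uniformly random placement of the large isolated diagonal entry among the columns outside $\mathcal{J}$.

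The reverse inequality ``$\leq$'' is the main obstacle, since it is genuinely von Neumann's theorem. I would invoke it directly: $\Phi$ is bilinear, hence convex in $\pi$ and concave in $\mathcal{D}$, and both strategy sets are compact convex simplices. If a self-contained argument were wanted, I would first try LP duality. The left side is the LP ``minimize $t$ subject to $\sum_{\operatorname{alg}}\pi(\operatorname{alg})c(\operatorname{alg},\mathbf{A})\le t$ for all $\mathbf{A}$, with $\pi$ in the simplex.'' Its dual is exactly the right-hand max--min problem, and strong duality holds because both LPs are feasible and bounded. Compactness guarantees that all the $\min$ and $\max$ are attained, which completes the proof.
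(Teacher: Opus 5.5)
The paper does not prove this lemma; it quotes Yao's principle as a classical fact and uses it only in the subsequent lower-bound theorem. Your argument is the standard derivation from von Neumann's minimax theorem, and it is correct. The reduction of both inner optimizations to the bilinear form $\Phi$ on a product of simplices checks out. So do the weak-duality inequality and the LP pair, whose dual is $\max s$ subject to $\sum_{\mathbf{A}}\mathcal{D}(\mathbf{A})\,c(\operatorname{alg},\mathbf{A})\geq s$ for every $\operatorname{alg}$.

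What your version adds over the bare citation is explicit hypotheses, and they matter. Finiteness is genuinely needed for the equality, not just convenient. For example, suppose both players name a natural number and the algorithm pays $1$ when the adversary's number is larger. Then the left-hand side (with $\inf/\sup$) is $1$ and the right-hand side is $0$.

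Your remark that only ``$\geq$'' is used downstream is exactly right and worth stressing. That direction needs no finiteness: for a nonnegative cost you can swap the two expectations by Tonelli. This matters because the deterministic estimators in the application do not form a finite class, since their outputs are real numbers. If you do want the full equality in that setting, reduce through cost profiles rather than ``behaviours''. Each deterministic algorithm induces a failure vector in $\{0,1\}^{d+1}$ on $\mathbf{A}^{(0)},\dots,\mathbf{A}^{(d)}$. There are finitely many such vectors, and expected costs depend only on the induced distribution over them.

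One inaccuracy in your aside: the paper's hard distribution is not a uniform placement of the large entry. It puts mass $1/2$ on $\mathbf{A}^{(0)}$ and $1/(2d)$ on each $\mathbf{A}^{(j)}$. That mixture is essential. Under a uniform placement alone, the deterministic algorithm that makes no queries and outputs $\varepsilon_L^2+h^2$ succeeds on every input in the support, so no lower bound follows.
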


This lemma states that the worst-case expected cost of the best randomized algorithm on a given problem equals the best expected cost of any deterministic algorithm against the hardest probability distribution over inputs. A discussion on how to use this lemma can be found in \cite{halikias2026transposefreelinearalgebra}. We use it to prove the following theorem, that applies to the case in which no prior information is available about the location of residual components not captured by the CUR approximation. In this setting, even allowing arbitrary randomized and adaptive selection of the extra columns cannot avoid a worst-case linear dependence on the number of possible locations of such a component.
\begin{theorem}
    Consider the family of matrices
    \begin{equation}
        \mathbf{A}^{(0)}
        =
        \begin{pmatrix}
            \mathbf{L} & \mathbf{0} \\
            0          & \mathbf{0}
        \end{pmatrix}
        \in\mathbb{R}^{m\times n},
        \qquad
        \mathbf{A}^{(j)}
        =
        \begin{pmatrix}
            \mathbf{L} & \mathbf{0}         \\
            0          & h\mathbf{e}_j^\top
        \end{pmatrix}
        \in\mathbb{R}^{m\times n},
        \qquad j=1,\ldots,d,
    \end{equation}
    where $\mathbf{L}\in\mathbb{R}^{\bar m\times\bar n}$ has rank one and
    $\|\mathbf{L}\|_F=1$, $m=\bar m+1$, $n=\bar n+d$, $h>0$, and
    $\mathbf{e}_j\in\mathbb{R}^d$ denotes the $j$-th canonical basis vector.
    Assume a CUR approximation is given, and that, for each $\mathbf{A}^{(j)}$,
    the column corresponding to $h$ is not captured by the approximation, i.e.,
    the CUR residuals are
    \begin{equation}
        \mathbf{A}^{(0)}-\mathbf{A}^{(0)}_{\mathrm{CUR}}
        =
        \begin{pmatrix}
            \mathbf{F}_L & \mathbf{0} \\
            0            & \mathbf{0}
        \end{pmatrix},
        \qquad
        \mathbf{A}^{(j)}-\mathbf{A}^{(j)}_{\mathrm{CUR}}
        =
        \begin{pmatrix}
            \mathbf{F}_L & \mathbf{0}         \\
            0            & h\mathbf{e}_j^\top
        \end{pmatrix},
        \qquad j=1,\ldots,d,
    \end{equation}
    where $\|\mathbf{F}_L\|_F=\varepsilon_L$. Define
    \begin{equation}
        \mathcal{T}
        :=
        \left\{
        \bar n+j:\ j\in\{1,\ldots,d\}
        \right\},
    \end{equation}
    and fix $\delta>1$. Assume that
    $h^2>(\delta^2-1)\varepsilon_L^2$. Finally, assume that all information available to the estimator prior to its additional column queries is identical across the matrices $\mathbf{A}^{(0)}, \dots,\mathbf{A}^{(d)}$

    Then, for every target success probability $\pi\in(1/2,1)$, there does not
    exist a randomized, possibly adaptive, algorithm that inspects at most
    \begin{equation}
        q < (2\pi-1)|\mathcal{T}|
    \end{equation}
    columns and outputs a $\delta$-approximation of
    $\|\mathbf{A}-\mathbf{A}_{\mathrm{CUR}}\|_F^2$ with probability at least
    $\pi$ for every input in
    $\{\mathbf{A}^{(0)},\mathbf{A}^{(1)},\ldots,\mathbf{A}^{(d)}\}$.
    In particular, any such algorithm requires
    $$
        q=\Omega(|\mathcal{T}|)=\Omega(d)
    $$
    column queries in the worst case.
\end{theorem}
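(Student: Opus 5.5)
We will argue via Yao's minimax principle, fixing a hard input distribution $\mathcal{D}$ on $\{\mathbf{A}^{(0)},\dots,\mathbf{A}^{(d)}\}$ and bounding the success probability of every deterministic, possibly adaptive, algorithm that queries at most $q$ columns. The natural choice puts mass $1/2$ on $\mathbf{A}^{(0)}$ and spreads the remaining $1/2$ uniformly over $\mathbf{A}^{(1)},\dots,\mathbf{A}^{(d)}$. We take the cost $c(\operatorname{alg},\mathbf{A})$ to be the indicator that the output is \emph{not} a $\delta$-approximation. If some randomized algorithm succeeded with probability at least $\pi$ on every input, its worst-case expected cost would be at most $1-\pi$. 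By the lemma, every deterministic algorithm would then have expected cost at most $1-\pi$ under $\mathcal{D}$. It therefore suffices to show that every deterministic algorithm with $q<(2\pi-1)d$ queries fails with $\mathcal{D}$-probability strictly greater than $1-\pi$.

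The first step is to show that no single output is a $\delta$-approximation for both $\mathbf{A}^{(0)}$ and any $\mathbf{A}^{(j)}$. The squared errors are $\varepsilon_L^2$ and $\varepsilon_L^2+h^2$. A $\delta$-approximation of $x$ lies in $[x/\delta,\delta x]$ (or the analogous interval in the root-scale convention; the same argument applies). The assumption $h^2>(\delta^2-1)\varepsilon_L^2$ gives $\varepsilon_L^2+h^2>\delta^2\varepsilon_L^2$, so the intervals $[\varepsilon_L^2/\delta,\delta\varepsilon_L^2]$ and $[(\varepsilon_L^2+h^2)/\delta,\delta(\varepsilon_L^2+h^2)]$ are disjoint. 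We must check this carefully against the exact definition of ``$\delta$-approximation'' being used; that dependence is exactly why the hypothesis carries the factor $\delta^2-1$.

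The second step is the indistinguishability argument. All prior information is identical across the inputs, and the columns outside $\mathcal{T}$ coincide. A deterministic algorithm run on $\mathbf{A}^{(0)}$ therefore produces a fixed query sequence $Q_0$ with $|Q_0\cap\mathcal{T}|\le q$, and a fixed output. On $\mathbf{A}^{(j)}$ with $\bar n+j\notin Q_0$, every answer along this adaptive path matches the one seen on $\mathbf{A}^{(0)}$, so the run and the output are identical. Consequently, for at least $d-q$ indices $j$, the algorithm gives the same answer on $\mathbf{A}^{(0)}$ and on $\mathbf{A}^{(j)}$, and by the first step it fails on one of the two. There are two cases. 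If the output is wrong for $\mathbf{A}^{(0)}$, the failure probability is at least $1/2$. Otherwise it fails on at least $d-q$ of the $\mathbf{A}^{(j)}$, giving failure probability at least $\tfrac{1}{2}\cdot\tfrac{d-q}{d}$. Since $\tfrac12\ge\tfrac12\cdot\tfrac{d-q}{d}$, the failure probability is always at least $\tfrac{d-q}{2d}$.

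The final step is the arithmetic. We have $\tfrac{d-q}{2d}>1-\pi$ if and only if $q<(2\pi-1)d$, which contradicts the bound obtained from Yao's principle and yields $q\geq(2\pi-1)|\mathcal{T}|=\Omega(d)$. The main obstacle is making the adaptive indistinguishability rigorous: we must formalize that the transcript on $\mathbf{A}^{(j)}$ agrees with the one on $\mathbf{A}^{(0)}$ until column $\bar n+j$ is queried. An induction over query steps should suffice for this. We also need Yao's principle in the direction ``randomized worst case $\ge$ distributional deterministic'', which is the easy inequality and the only one used.
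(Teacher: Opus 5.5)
Your proposal is correct and is essentially the paper's proof: the same hard distribution on $\{\mathbf{A}^{(0)},\dots,\mathbf{A}^{(d)}\}$, the same disjointness of the squared-scale $\delta$-intervals from $h^2>(\delta^2-1)\varepsilon_L^2$, the same adaptive indistinguishability argument on the at least $d-q$ uninspected locations, and an equivalent final bound (your failure probability $\geq \frac{d-q}{2d}$ is the paper's success probability $\leq \frac12+\frac{q}{2d}$). There are two minor slips, neither affecting validity. First, Yao's principle gives \emph{some} deterministic algorithm, not every one, with $\mathcal{D}$-failure at most $1-\pi$; the sufficient condition you then state is nonetheless the correct one. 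Second, your root-scale aside does not go through ``the same'' under this hypothesis, since disjointness of $[\varepsilon_L/\delta,\delta\varepsilon_L]$ and $[\sqrt{\varepsilon_L^2+h^2}/\delta,\delta\sqrt{\varepsilon_L^2+h^2}]$ would require $h^2>(\delta^4-1)\varepsilon_L^2$; this is irrelevant here because the statement concerns the squared error.
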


\begin{proof}
    Consider the distribution $\mathcal{D}$ over the input matrices defined as
    follows. With probability $1/2$, draw $\mathbf{A}^{(0)}$, while with
    probability $1/2$, draw $\mathbf{A}^{(J)}$, where
    \begin{equation}
        J\sim\operatorname{Unif}\{1,\ldots,d\}.
    \end{equation}
    Equivalently,
    \begin{equation}
        \mathbb{P}_{\mathbf{A}\sim\mathcal{D}}
        \left(\mathbf{A}=\mathbf{A}^{(0)}\right)
        =\frac{1}{2},
        \qquad
        \mathbb{P}_{\mathbf{A}\sim\mathcal{D}}
        \left(\mathbf{A}=\mathbf{A}^{(j)}\right)
        =\frac{1}{2d},
        \qquad j=1,\ldots,d.
    \end{equation}

    Fix a deterministic algorithm that adaptively inspects at most $q$ columns.
    Run the algorithm on the input $\mathbf{A}^{(0)}$, and collect the indices
    of the inspected columns in $\mathcal{J}_q$. Since the algorithm is
    deterministic, $\mathcal{J}_q$ is fixed by the observations made on
    $\mathbf{A}^{(0)}$. Let
    \begin{equation}
        \mathcal{S}
        :=
        \mathcal{J}_q\cap\mathcal{T}
    \end{equation}
    denote the set of inspected columns among the $d$ possible locations of the
    additional component. Clearly,
    \begin{equation}
        |\mathcal{S}|\leq q.
    \end{equation}

    Now consider an input $\mathbf{A}^{(j)}$ with
    $\bar n+j\notin\mathcal{S}$. The only difference between
    $\mathbf{A}^{(0)}-\mathbf{A}^{(0)}_{\mathrm{CUR}}$ and
    $\mathbf{A}^{(j)}-\mathbf{A}^{(j)}_{\mathrm{CUR}}$ lies in column
    $\bar n+j$. Since this column is not inspected along the execution of the
    algorithm on $\mathbf{A}^{(0)}$, every column observed by the algorithm is
    identical under $\mathbf{A}^{(0)}$ and $\mathbf{A}^{(j)}$. Therefore, by
    induction over the adaptive queries, the deterministic algorithm makes the
    same sequence of queries and observes the same values on both inputs.
    Consequently, it must output the same value for $\mathbf{A}^{(0)}$ and
    $\mathbf{A}^{(j)}$.

    The target quantities are
    \begin{equation}
        \left\|
        \mathbf{A}^{(0)}-\mathbf{A}^{(0)}_{\mathrm{CUR}}
        \right\|_F^2
        =
        \varepsilon_L^2,
        \qquad
        \left\|
        \mathbf{A}^{(j)}-\mathbf{A}^{(j)}_{\mathrm{CUR}}
        \right\|_F^2
        =
        \varepsilon_L^2+h^2.
    \end{equation}
    Since
    $h^2>(\delta^2-1)\varepsilon_L^2$,
    we have
    \begin{equation}
        \delta\varepsilon_L^2
        <
        \frac{\varepsilon_L^2+h^2}{\delta}.
    \end{equation}
    Hence the two $\delta$-approximation intervals
    \begin{equation}
        \mathcal{I}_0
        =
        \left[
            \frac{\varepsilon_L^2}{\delta},
            \delta\varepsilon_L^2
            \right],
        \qquad
        \mathcal{I}_j
        =
        \left[
            \frac{\varepsilon_L^2+h^2}{\delta},
            \delta(\varepsilon_L^2+h^2)
            \right]
    \end{equation}
    are disjoint. Therefore, whenever $\bar n+j\notin\mathcal{S}$, the common
    output of the deterministic algorithm on $\mathbf{A}^{(0)}$ and
    $\mathbf{A}^{(j)}$ can be a $\delta$-approximation for at most one of these
    two inputs.

    We now bound the average success probability of the deterministic algorithm
    under $\mathcal{D}$. Let $a_0\in\{0,1\}$ indicate whether the algorithm is
    successful on $\mathbf{A}^{(0)}$. For each $j$ such that
    $\bar n+j\notin\mathcal{S}$, the disjointness of the two approximation
    intervals implies that, if $a_0=1$, the algorithm must fail on
    $\mathbf{A}^{(j)}$. For indices satisfying $\bar n+j\in\mathcal{S}$, we
    upper bound the success probability by one. Therefore, if the algorithm is
    successful on $\mathbf{A}^{(0)}$, its success probability under
    $\mathcal{D}$ is at most
    \begin{equation}
        \frac{1}{2}
        +
        \frac{1}{2}\frac{|\mathcal{S}|}{d}
        \leq
        \frac{1}{2}
        +
        \frac{q}{2d}.
    \end{equation}
    If the algorithm is unsuccessful on $\mathbf{A}^{(0)}$, then even assuming
    that it succeeds on every $\mathbf{A}^{(j)}$, its success probability under
    $\mathcal{D}$ is at most $1/2$. Thus, in either case,
    \begin{equation}
        \mathbb{P}_{\mathbf{A}\sim\mathcal{D}}
        \left(
        \operatorname{alg}\text{ successfully produces a }
        \delta\text{-approximation}
        \right)
        \leq
        \frac{1}{2}
        +
        \frac{q}{2d}.
    \end{equation}

    It follows that if
    \begin{equation}
        q < (2\pi-1)d
        =
        (2\pi-1)|\mathcal{T}|,
    \end{equation}
    then every deterministic algorithm using at most $q$ column queries has
    average success probability strictly smaller than $\pi$ under
    $\mathcal{D}$. By Yao's minimax principle, the same lower bound applies to
    randomized algorithms: there does not exist a randomized, possibly adaptive,
    algorithm using at most $q$ column queries that achieves success probability
    at least $\pi$ on every input in the support of $\mathcal{D}$. Since
    $$
        \operatorname{supp}(\mathcal{D})
        \subseteq
        \{
        \mathbf{A}^{(0)},\mathbf{A}^{(1)},\ldots,\mathbf{A}^{(d)}
        \},
    $$
    this proves the claim.
\end{proof}

Situations of this kind are unlikely to arise in the practical settings that are of primary interest in this work. More importantly, the preceding worst-case result concerns the setting in which no prior information is available about the location of residual components not captured by the CUR approximation. When such information is available, the corresponding information model changes, and the lower bound above does not directly apply. In particular, prior or data-driven information can be incorporated into our estimator through the probability distribution used to select the extra columns, assigning larger probability to columns that are more likely to contain significant residual components.

In a real-world application, the adversarial construction above would typically reflect either a flawed experimental design or a situation in which the user possesses such additional information about the structure of $\mathbf{A}$. For example, in spectro-microscopy, the first case would represent stacking two different samples one after the other, meaning trying to perform two different experiments at once. In the latter case, a data-driven approach to the selection of extra columns would naturally handle what we have called the worst case, rendering it no more difficult than the other settings considered in this section.
\subsection{The bias of our estimator}
It is important to note that while all of these results focus on the $\|\cdot\|_F^2$, our quantity of interest is actually the $\|\cdot\|_F$ estimator. From Jensen's inequality, we know that $  \mathbb{E}[\sqrt x] \leq \sqrt{\mathbb{E}[x]}$, meaning that the norm of the subset of the error will provide a slight underestimate of the norm of the true error. We can approximate this bias by using a second order Taylor expansion of the square root function. For a general random variable $x$ with expectation $\mu$ and variance $\xi^2$ we know that
\begin{align}
    \mathbb{E}[\sqrt{x}] & \approx \mathbb{E}\left[\sqrt{\mu} + \frac{(x - \mu)}{2 \sqrt{\mu}} - \frac{(x - \mu)^2}{8 \mu^{3/2}}\right] \\
                         & = \sqrt{\mu} - \frac{\xi^2}{8 \mu^{3/2}}
\end{align}
Using this and that $\mu = \|\mathbf{A} - \curX\|_F^2 = \mathbb{E}[\|(\mathbf{A} - \curX)\mathbf{S}_q\|_F^2]$ from \cref{thm:unbiased}, we can conclude that
\begin{equation}
    \|\mathbf{A} - \curX\|_F - \mathbb{E}[\|(\mathbf{A} - \curX)\mathbf{S}_q\|_F] \approx \frac{\xi^2 }{8\|\mathbf{A} - \curX\|_F^3}.
\end{equation}
Here we obtain that the bias of the estimator for the norm is dependent on the true error of the solution and the variance of the squared-error estimator. The latter will depend on the number of extra and the residual magnitude across columns.

With the approximate finite characterization of the bias, it is also useful to provide a high probability characterization of that bias. We obtain this high probability statement as a corollary to \cref{thm:SG}.

\begin{corollary}
    Under the conditions of \cref{thm:SG},  we have
    \begin{equation}
        \mathbb{P}\left( \frac{\|(\mathbf{A} - \curX)\mathbf{S}_q\|_F -\|\mathbf{A} - \curX\|_F}{\|\mathbf{A} - \curX\|_F} \geq \epsilon \right) \leq 2 \exp \left(-\frac{2(\epsilon^2 - 2\epsilon)^2 }{(\max_{\ell\in\mathcal{S}}w_{\ell}^2)^2} \right)
    \end{equation}
\end{corollary}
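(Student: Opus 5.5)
The plan is to reduce the event for the root-scale estimator to an event for the squared estimator, and then apply the sub-Gaussian tail bound implied by \cref{thm:SG}. Write $X := \|(\mathbf{A}-\curX)\mathbf{S}_q\|_F^2$ and $\mu := \|\mathbf{A}-\curX\|_F^2$. By \cref{thm:unbiased}, $\mu=\mathbb{E}[X]$, and we assume $\mu>0$, since otherwise the ratio is undefined. Set $Z := (X-\mu)/\mu$. \Cref{thm:SG} states that $Z\sim\mathrm{SG}(\sigma)$ with $\sigma = \max_{\ell\in\mathcal{S}}w_\ell^2/2$, and a sub-Gaussian variable with parameter $\sigma$ satisfies the standard two-sided Hoeffding-type bound
\begin{equation*}
\mathbb{P}(|Z|\geq t)\leq 2\exp\left(-\frac{t^2}{2\sigma^2}\right) = 2\exp\left(-\frac{2t^2}{(\max_{\ell\in\mathcal{S}}w_\ell^2)^2}\right).
\end{equation*}
This already has the shape of the claimed right-hand side. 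It therefore remains to identify $t$ in terms of $\epsilon$.

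Next, translate the event $\{(\sqrt{X}-\sqrt{\mu})/\sqrt{\mu}\geq\epsilon\}$ into an event on $Z$. Since $\sqrt{X}/\sqrt{\mu}=\sqrt{1+Z}$, the event is $\sqrt{1+Z}\geq 1+\epsilon$. For $\epsilon\geq 0$ this is equivalent to $Z\geq (1+\epsilon)^2-1=\epsilon^2+2\epsilon$. So the natural threshold is $t=\epsilon^2+2\epsilon$. If $\epsilon$ is negative (a lower deviation), the event $\sqrt{1+Z}\leq 1-|\epsilon|$ corresponds to $Z\leq \epsilon^2-2|\epsilon|$, so $|Z|\geq 2|\epsilon|-\epsilon^2$.

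The stated bound uses $(\epsilon^2-2\epsilon)^2$. I would handle this uniformly by noting that the two-sided event $\{|\sqrt{1+Z}-1|\geq\epsilon\}$, for $0<\epsilon<1$, implies $|Z|\geq \min\{\epsilon^2+2\epsilon,\,2\epsilon-\epsilon^2\} = 2\epsilon-\epsilon^2$. This uses $|Z| = |\sqrt{1+Z}-1|\,(\sqrt{1+Z}+1)$ together with $\sqrt{1+Z}+1\geq 2-\epsilon$ on the lower side. Squaring gives $t^2=(2\epsilon-\epsilon^2)^2=(\epsilon^2-2\epsilon)^2$. Plugging this into the tail bound gives exactly the claimed inequality, and the factor $2$ matches the two-sided sub-Gaussian bound. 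For the one-sided upper event with $\epsilon\geq 0$, the bound holds a fortiori, since $\epsilon^2+2\epsilon\geq|\epsilon^2-2\epsilon|$.

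The main obstacle is bookkeeping the range of $\epsilon$ so that the substitution $t=|\epsilon^2-2\epsilon|$ is legitimate. The deterministic bounds $Z\geq -1$ and $Z\leq\max w_\ell^2-1$, taken from the proof of \cref{thm:SG}, guarantee that $\sqrt{1+Z}$ is well defined. One still has to check monotonicity of $\epsilon\mapsto 2\epsilon-\epsilon^2$ on $[0,1]$; for $\epsilon\geq 1$ the lower-deviation event is either trivial or the bound is vacuous. I would state the reduction for $\epsilon\in(0,1)$ and remark that it extends to the upper tail for all $\epsilon>0$.
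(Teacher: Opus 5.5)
Your proposal is correct and is essentially the paper's argument. Both reduce $\sqrt{X}\ge(1+\epsilon)\sqrt{\mu}$ to $Z\ge 2\epsilon+\epsilon^2$ and $\sqrt{X}\le(1-\epsilon)\sqrt{\mu}$ to $-Z\ge 2\epsilon-\epsilon^2$, then apply the sub-Gaussian tail with parameter $\max_{\ell\in\mathcal{S}}w_\ell^2/2$ from Theorem~\ref{thm:SG}. The paper bounds the two tails separately and adds them, which amounts to your two-sided bound, and your remark that the one-sided statement as written already follows from the upper tail alone is also right.

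One small slip: on the lower side you have $\sqrt{1+Z}+1\le 2-\epsilon$, not $\ge$, so the product argument as written does not give the threshold. The correct justification is the direct one you gave a paragraph earlier, $-Z=1-(1+Z)\ge 1-(1-\epsilon)^2=2\epsilon-\epsilon^2$.
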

\begin{proof}
    For any $1>\epsilon >0$, the probability that
    \begin{align}
         & \mathbb{P}\left( \|(\mathbf{A} - \curX)\mathbf{S}_q\|_F \leq (1-\epsilon) \|\mathbf{A} - \curX\|_F \right)                                                         \\
         & =  \mathbb{P}\left( \|(\mathbf{A} - \curX)\mathbf{S}_q\|_F^2 \leq (1-\epsilon)^2 \|\mathbf{A} - \curX\|_F^2 \right)                                                \\
         & =  \mathbb{P}\left( \frac{\|(\mathbf{A} - \curX)\mathbf{S}_q\|_F^2 - \|\mathbf{A} - \curX\|_F^2}{\|\mathbf{A} - \curX\|_F^2 } \leq (\epsilon^2 - 2\epsilon)\right) \\
         & =  \mathbb{P}\left( \frac{\|\mathbf{A} - \curX\|_F^2 - \|(\mathbf{A} - \curX)\mathbf{S}_q\|_F^2}{\|\mathbf{A} - \curX\|_F^2} \geq (2\epsilon - \epsilon^2) \right) \\
         & \leq \exp \left(-\frac{2(\epsilon^2 - 2\epsilon)^2 }{(\max_{\ell\in\mathcal{S}}w_{\ell}^2)^2} \right),
    \end{align}
    where the last line uses the sub-Gaussian distribution from \cref{thm:SG}. Similarly, if $\epsilon > 0$ we have
    \begin{align}
         & \mathbb{P}\left( \|(\mathbf{A} - \curX)\mathbf{S}_q\|_F \geq (1+\epsilon) \|\mathbf{A} - \curX\|_F \right)                                                    \\
         & = \mathbb{P}\left( \|(\mathbf{A} - \curX)\mathbf{S}_q\|_F^2 \geq (1+\epsilon)^2 \|\mathbf{A} - \curX\|_F^2 \right)                                            \\
         & =\mathbb{P}\left( \|\frac{(\mathbf{A} - \curX)\mathbf{S}_q\|_F^2 - \|\mathbf{A} - \curX\|_F^2}{\|\mathbf{A} - \curX\|_F^2} \geq 2\epsilon +\epsilon^2 \right) \\
         & \leq \exp \left(-\frac{2(\epsilon^2 + 2\epsilon)^2 }{(\max_{\ell\in\mathcal{S}}w_{\ell}^2)^2} \right)
    \end{align}
    Combing these two bounds will give us
    \begin{equation}
        \mathbb{P}\left( \frac{|\|(\mathbf{A} - \curX)\mathbf{S}_q\|_F - \|\mathbf{A} - \curX\|_F  |}{ \|\mathbf{A} - \curX\|_F }\geq \epsilon \right) \leq 2 \exp \left(-\frac{2(\epsilon^2 - 2\epsilon)^2}{(\max_{\ell\in\mathcal{S}}w_{\ell}^2)^2} \right)
    \end{equation}
\end{proof}

\section{Quantifying the distribution of the estimator} \label{sec:uq}
Because the value of the estimator depends on the sample, a different sample of columns could mean a drastically different value of the estimate.
This variation in results can lead to incorrect determinations about whether a particular CUR is of high enough quality.
One way we can improve the reliability of our decisions based on an estimator is to obtain uncertainty intervals, which are likely to contain the value of the true error.
In order to compute these intervals, we need to know or estimate the distribution of the error across all possible subsets of a given size.
In statistics, estimating these distributions is known as uncertainty quantification \cite{pritchard2023practical}.

One possible approach is to estimate the maximal CUR error across all possible choices of unselected columns, in a manner similar to \cite{pritchard2023practical}.
Once this estimate is obtained, we can then plug in that quantity as the parameter of a sub-Gaussian distribution and derive the confidence intervals using that estimated distribution.
There are two problems with this approach.
First, if we lack global access to the full matrix, it is very difficult to determine the maximal error.
Second, even if we know the maximal error over all possible subsets, the sub-Gaussian distribution provides a highly conservative estimate \cite{pritchard2024solving}.
\subsection{Gaussian Estimation}
Another approach to uncertainty quantification of the Frobenius norm of the error can be found in the well established theory for the Horvitz-Thompson estimator. Specifically, it can be found in the theory that establishes the fact that the Horvitz-Thompson estimator is asymptotically normal \cite{berger1998rate}, in the sense that the number of columns and the number of extra columns grow to infinity. This means that in many scenarios, provided the number of total columns and extra columns is large enough, we can approximate the squared norm estimator with the normal distribution,
\begin{equation}
    \|(\mathbf{A} - \curX)\mathbf{S}_q\|^2_F - \|\mathbf{A} - \curX\|^2_F  \sim \mathcal{N}(0, \operatorname{Var}\!\left(\|(\mathbf{A}-\curX)\mathbf{S}_q\|_F^2\right)).
\end{equation}

Additionally, by using the delta method \cite{oehlert1992note}, we can find that the estimator for the Frobenius norm will be distributed
\begin{equation}\label{eq:asym_dixt}
    \|(\mathbf{A} - \curX)\mathbf{S}_q\|_F - \|\mathbf{A} - \curX\|_F  \sim \mathcal{N}\left(0, \frac{\operatorname{Var}\!\left(\|(\mathbf{A}-\curX)\mathbf{S}_q\|_F^2\right)}{4\|\mathbf{A} - \curX\|_F^2}\right).
\end{equation}
Using this result in the uniform sampling case, we can estimate \cref{eq:uniform-var} using the following sample variance formula for a Horvitz-Thompson estimator,
\begin{equation}\label{eq:varaince-approx}
    \widehat{\text{Var}} = \frac{q(N-q)}{4N(q-1)\|(\mathbf{A} - \curX)\mathbf{S}_q\|_F^2} \sum_{i=1}^q \left(\|(\mathbf{A} - \curX)\mathbf{S}_q(:,i)\|_2^2 - \frac{\|(\mathbf{A} - \curX)\mathbf{S}_q\|_F^2}{q} \right)^2.
\end{equation}

With this variance estimate, we can then form an uncertainty interval using $z-$intervals from classical statistics. Specifically, we can form the $(1 - \alpha)$ interval by finding the $1 - \alpha/2$ and $\alpha/2$ percentiles of \cref{eq:asym_dixt} with the variance approximation in \cref{eq:varaince-approx} used in place of the true variance. Then we can add and subtract those values from the estimator to produce the final interval. For instance, if we wanted a 95\% interval we would obtain,
\begin{equation}
    \left[\|(\mathbf{A} - \curX)\mathbf{S}_q\|_F-1.96\sqrt{\widehat{\text{Var}}},\text{ } \|(\mathbf{A} - \curX)\mathbf{S}_q\|_F +1.96\sqrt{\widehat{\text{Var}}}\right].
\end{equation}

It is important to note that the Gaussian distribution only holds asymptotically; thus, the above approach is an approximation that is only accurate when the distribution of the column norms of the residual is close to the Gaussian, which depending on the matrix may or may not hold true.

\subsection{Bootstrap Estimation}\label{sec:boot}

Another approach to quantifying the uncertainty involves using a common statistical technique known as the bootstrap \cite{efron1979bootstrap}.
There is prior work in randomized numerical linear algebra that uses the bootstrap or related techniques, e.g., \cite{lopes2020error, Lopes2018Error, EpperlyTropp_VarianceEst2024, LazPearPrit26}, but our use of the bootstrap is more classical.
At its core, the bootstrap is used to estimate the distribution of a particular function, $t(\cdot)$, over all possible samples of size $q$ from an unknown population $\mathcal{P}$.
In the context of our estimator, the relevant function is  $t(\{z_1, \dots, z_q\}) = {\sum_{i=1}^q z_i}$, where $z_i = \|(\mathbf{A}-\curX) \mathbf{S}_{q}(:, i)\|_2^2$ are the sample entries.
The bootstrap then estimates the distribution of $t(\{z_1, \dots, z_q\})$ across all possible samples of size $q$  in the population, by treating $\{z_1, \dots, z_q\}$ as the population and then repeatedly sampling with replacement from this population.
Once these bootstrap samples are obtained, if we let $z_i^{(j)}$ correspond to $i^\text{th}$ observation of the $j^\text{th}$ bootstrap sample, we can evaluate $t(\{z^{(j)}_1, \dots,  z^{(j)}_q\}) - t(\{z_1, \dots, z_q\})$ for each bootstrap sample. The distribution of these differences will then approximate the distribution of the $t(\{z_1, \dots, z_q\})$ across all possible choices of $z_1, \dots, z_q$ \cite{efron1979bootstrap}. We can therefore use quantiles of this distribution to compute confidence intervals for $t(\{z_1, \dots, z_q\})$. 

To measure the uncertainty of our error estimator, we compute the bootstrap according to the procedure in \Cref{alg:boot}.
In this algorithm, we take as inputs the number of bootstrap iterations $b$, the confidence level $1 - \alpha$ to cover with our interval, and our original column residual norms squared given by $z_i = \|(\mathbf{A}-\curX)\mathbf{S}_{q}(:, i)\|_2^2$.
We first define the relevant function (line 1) and perform a standard bootstrap procedure to collect bootstrap samples $\hat \theta_{(i)}$ (lines 3-4).
Finally, we obtain the desired interval by finding the $\alpha/2$ and $1-\alpha/2$ percentiles of those differences, then subtracting those quantities from the initial error estimate.
This bootstrap interval can experience error from two sources. First, it has error from the Monte Carlo sampling used to approximate the distribution. This error will decrease as $b \to \infty$. Second, there is error associated with the validity of the bootstrap approximation itself, which can be reduced by increasing the number of extra columns selected. Finally, to match the scale of the $\|\cdot\|_F$ we return the square root of the lower and upper bound as our interval.

\begin{algorithm}
    \caption{Bootstrap Error Confidence Interval} \label{alg:boot}
    \begin{algorithmic}[1]
        \Require $b, \{z_1, \dots, z_q\}, \alpha$
        \State $ \bar\theta = { \sum_{j=1}^q z_j}$
        \For{$i = 1:b$}
        \State Sample with replacement,
        $\bar Z_i = \{\bar z_1^{(i)}, \dots \bar z_q^{(i)}\}$
        \State Compute $\hat \theta_{(i)} = \sum_{j=1}^q \bar z_j^{(i)}$
        \EndFor
        \State $\Theta= \{(\hat \theta_{(1)} - \bar \theta), \dots, (\hat \theta_{(b)} - \bar \theta)\}$\\
        \Return $[\sqrt{\bar \theta - \Theta_{1-\alpha/2}}, \sqrt{\bar \theta - \Theta_{\alpha/2}}]$
    \end{algorithmic}
\end{algorithm}


While the bootstrap typically works, it does require the observations in the sample to be independent from one another and the distribution to not have extreme tails \cite{wasserman2008all}.
While sampling extra columns without replacement introduces a small amount of dependency, seemingly contradicting the first assumption, when the number of extra columns is small relative to the total number of columns, the impact of this dependency will be minimal. One could adjust for this dependency using a finite sample bootstrap approaches\cite{mccarthy1985bootstrap}. The finite sample approach computes the variance of the $\hat{\theta}_{(i)}$s in \cref{alg:boot} and scales them by $(n-r - q) / n-r$ \cite{rao1988resampling}. If we consider a rank 100 approximation to a matrix of size 10,000 with 10 extra columns this adjustment would be $9890/9900 = .9989$, in which case the finite sample bootstrap is essentially the same as the standard bootstrap.

The other assumption that can be problematic is the long tail probabilities, as a matrix with one error of large norm and the rest of small norm will not be accounted for in the bootstrap if it is not chosen in the initial set of extra columns.
This shortcoming is a consequence of not being able to see the whole matrix and cannot be overcome without global access, as previously discussed in Section~\ref{sec:analysis}.

\section{Numerical Experiments}
\label{sec:experiments}
In this section, we illustrate the performance of the proposed estimator through a series of numerical experiments.
Specifically, we assess the accuracy of the estimator on simple, challenging, and practical examples,
demonstrate how bootstrap can be used to quantify the reliability of the estimate in practice,
and conclude with a discussion of the worst-case scenario.

\subsection{Accuracy of the estimator}

As a first set of experiments, we assess the practical accuracy of the proposed square-root estimator.
To this end, we consider three experiments of increasing difficulty, illustrating the behaviour of the estimator across different regimes.
In the following experiments, errors are reported on the original Frobenius-norm scale. Specifically, we plot the true Frobenius error and the square root of the unbiased estimator of the squared Frobenius error. Since the square root is applied before averaging, the reported averages are of these root-scale quantities. These experiments therefore assess the practical accuracy of the square-root estimator.

\paragraph{Exponentially decaying singular values.}
\begin{figure}
    \centering
    \begin{subfigure}{0.48\textwidth}
        \centering
        \scalebox{0.7}{
\begin{tikzpicture}
    \begin{axis}[
            width=\textwidth,
            height=8cm,
            xlabel={Rank},
            ymode=log,
            grid=both,
            legend pos = south west
        ]

        \addplot[
            color=gray,
            solid,
            line width=1pt,
            mark=o,
            mark options={solid, scale = 0.7, fill = black},
        ]
        table[
                x=s_vec,
                y=errSVD,
                col sep=comma
            ]{csv-paper/quality-exp6/summary.csv};
        \addlegendentry{Spectrum}

        \addplot[
            color=oxfordnavy,
            solid,
            line width=1.5pt,
            mark=o,
            mark options={fill=white, scale = 1.5}
        ]
        table[
                x=s_vec,
                y=errCUR,
                col sep=comma
            ]{csv-paper/quality-exp6/summary.csv};
        \addlegendentry{CUR error}

        \addplot[
            color=signalorange,
            dashed,
            line width=1.5pt,
            mark=x,
            mark options={solid, scale=1.5},
        ]
        table[
                x=s_vec,
                y=tmiCURr,
                col sep=comma
            ]{csv-paper/quality-exp6/summary.csv};
        \addlegendentry{Estimate}

    \end{axis}
\end{tikzpicture}
}
        \caption{Exponential decay.}
        \label{fig:exp-decay}
    \end{subfigure}
    \hfill
    \begin{subfigure}{0.48\textwidth}
        \centering
        \scalebox{0.7}{
\begin{tikzpicture}
    \begin{axis}[
            width=\textwidth,
            height=8cm,
            xlabel={Rank},
            ymode=log,
            grid=both,
            legend to name={dummy},  
        ]

        \addplot[
            color=gray,
            solid,
            line width=1pt,
            mark=o,
            mark options={solid, scale = 0.7, fill = black},
        ]
        table[
                x=s_vec,
                y=errSVD,
                col sep=comma
            ]{csv-paper/qualityp15-frankenstein/summary.csv};
        \addlegendentry{Spectrum}

        \addplot[
            color=oxfordnavy,
            solid,
            line width=1.5pt,
            mark=o,
            mark options={fill=white, scale = 1.5}
        ]
        table[
                x=s_vec,
                y=errCUR,
                col sep=comma
            ]{csv-paper/qualityp15-frankenstein/summary.csv};
        \addlegendentry{CUR error}

        \addplot[
            color=signalorange,
            dashed,
            line width=1.5pt,
            mark=x,
            mark options={solid, scale=1.5},
        ]
        table[
                x=s_vec,
                y=tmiCURr,
                col sep=comma
            ]{csv-paper/qualityp15-frankenstein/summary.csv};
        \addlegendentry{Estimate}

    \end{axis}
\end{tikzpicture}
}
        \caption{Frankenstein.}
        \label{fig:frankestein}
    \end{subfigure}
    \caption{Estimation accuracy. True error of a CUR approximation and root-scale estimate of the error for exponentially decaying singular values (\textit{left}) and for the Frankenstein matrix (\textit{right}).}
    \label{fig:quality1}
\end{figure}
We begin by examining the behaviour of the estimator on a problem that is classically considered favourable for low-rank approximations,
namely a matrix $\mathbf{A}$ whose singular values decay exponentially.
The rapid spectral decay leads to highly accurate low-rank approximations,
and therefore represents a natural starting point for validating the proposed estimator.
We construct a $1000 \times 1000$ matrix $\mathbf{A}$ via its singular value decomposition,
imposing Haar distributed singular vectors and singular values $\sigma_i = 2^{-i/6}$.
We approximate $\mathbf{A}$ with a CUR approximation computed via column-pivoted QR for increasing target rank, and sample uniformly $q = 5$ extra columns.
Figure~\ref{fig:exp-decay} reports the average over $100$ independent trials of this experiment.
We observe that the estimate of the CUR error closely tracks the true error across all ranks considered.

\paragraph{Frankenstein matrix.}
To test the estimator in a more challenging setting, we construct a ``Frankenstein'' matrix,
that is, a $10000 \times 10000$ block diagonal matrix whose $2500 \times 2500$ blocks are
the adversarial test matrices \texttt{cauchy}, \texttt{golub}, \texttt{randcorr}, and \texttt{lotkin}
from Julia's \texttt{MatrixDepot}.
The CUR approximation is computed by selecting the first columns and rows of the matrix with increasing target rank,
which leads to a poor low-rank approximation.
We sample uniformly $q = 15$ extra columns.
Figure~\ref{fig:frankestein} reports the average over $100$ independent trials.
We observe that even in this highly adversarial setting,
and in contrast to the failure of the LRO estimator reported in Section~\ref{subsec:LRO-cur},
the proposed estimator accurately predicts the CUR error.
\paragraph{Spectro-microscopy experiment}

As already mentioned, the proposed estimator is of particular interest in settings where access to the matrix $\mathbf{A}$ is possible only through direct measurement of a small portion of its entries. A concrete example is the use of CUR approximations for subsampling in spectro-microscopy. In these experiments, we try to reconstruct the spatial and spectral identities of materials present in a sample. This is done via spectral scans of the sample at different levels of energy. Once the measurements are obtained, the analysis of such data is relatively fast and the true bottleneck of this experiment is the acquisition of the measurements themselves. However, it has been shown that the subsampling task arising in this (and possibly many other) synchrotron experiments can be reduced to a CUR approximation of a very low-rank matrix \cite{meier2026reducingacquisitiontimeradiation}. Since during the actual experiment only an approximate target rank is available, and restarting the entire experiment to refine the rank of the approximation is simply too expensive, an adaptive procedure would be highly desirable.
We therefore test our estimator on two spectro-microscopy datasets from \cite{meier_2026_18470992}. The first sample is of size $101\times 101$ and it is scanned for $149$ energy levels, while the second is of size $92\times 79$ and scanned for $150$ energy levels. We perform CURISS (as in \cite{meier2026reducingacquisitiontimeradiation}) on them, that is a problem specific way of finding good indices for a CUR of the unfolded tensor in which the rows correspond to a full energy scan and a column to a pixel over all energy levels.
Figure~\ref{fig:spectromicro} shows the average over $100$ trials of the CUR error and the proposed estimate measuring only two extra spatial rows (corresponding to columns in the appropriate  experiment representation) for these two datasets versus the subsampling ratio, i.e., how many measurements have been used to obtain a CUR approximation compared to full measurements. The results illustrate the practical relevance of the proposed estimator.
Indeed, the prediction of the error is very accurate, suggesting that an adaptive stopping criterion based on the proposed estimate would be both sensible and beneficial in practice.

\begin{figure}
    \centering
    \begin{subfigure}[b]{0.48\textwidth}
        \centering
        \scalebox{0.7}{
\begin{tikzpicture}
    \begin{axis}[
            width=\textwidth,
            height=8cm,
            xlabel={Subsampling ratio},
            ymode=log,
            grid=both,
            grid style       = {line width=0.3pt, draw=black!20},
            major grid style = {line width=0.5pt, draw=black!35},
            axis background/.style = {fill=white},
            axis line style  = {black},
            tick style       = {black},
            tick label style = {font=\small, color=black},
            label style      = {font=\small\bfseries, color=black},
            legend pos       = north east,
            legend style     = {
                    font = \small,
                    fill = white,
                    draw = black!50,
                    text = black,
                },
            legend cell align = left,
        ]
        \addplot[
            color=oxfordnavy,
            solid,
            line width=1.5pt,
            mark=o,
            mark options={fill=white, draw=oxfordnavy, scale=1.5}
        ]
        table[
                x=p_actual,
                y=errCUR,
                col sep=comma
            ]{csv-paper/quality-DS1/summary.csv};
        \addlegendentry{CUR error}
        \addplot[
            color=signalorange,
            dashed,
            line width=1.5pt,
            mark=x,
            mark options={solid, scale=1.5},
        ]
        table[
                x=p_actual,
                y=tmiCURl_mean,
                col sep=comma
            ]{csv-paper/quality-DS1/summary.csv};
        \addlegendentry{Estimate}
    \end{axis}
\end{tikzpicture}
}
        \caption{DS1.}
        \label{fig:quality-ds1}
    \end{subfigure}
    \hfill
    \begin{subfigure}[b]{0.48\textwidth}
        \centering
        \scalebox{0.7}{
\begin{tikzpicture}
    \begin{axis}[
            width=\textwidth,
            height=8cm,
            xlabel={Subsampling ratio},
            ymode=log,
            grid=both,
            grid style       = {line width=0.3pt, draw=black!20},
            major grid style = {line width=0.5pt, draw=black!35},
            axis background/.style = {fill=white},
            axis line style  = {black},
            tick style       = {black},
            tick label style = {font=\small, color=black},
            label style      = {font=\small\bfseries, color=black},
            legend style={draw=none, fill=none},
            legend entries={},
        ]
        \addplot[
            color=oxfordnavy,
            solid,
            line width=1.5pt,
            mark=o,
            mark options={fill=white, draw=oxfordnavy, scale=1.5}
        ]
        table[
                x=p_actual,
                y=errCUR,
                col sep=comma
            ]{csv-paper/quality-DS2/summary.csv};
        \addplot[
            color=signalorange,
            dashed,
            line width=1.5pt,
            mark=x,
            mark options={solid, scale=1.5},
        ]
        table[
                x=p_actual,
                y=tmiCURl_mean,
                col sep=comma
            ]{csv-paper/quality-DS2/summary.csv};
    \end{axis}
\end{tikzpicture}
}
        \caption{DS2.}
        \label{fig:quality-ds2}
    \end{subfigure}
    \caption{Estimation accuracy. True error of a CUR approximation and root-scale estimate of the error for two spectro-microscopy datasets.}
    \label{fig:spectromicro}
\end{figure}
\subsection{Estimator uncertainty quantification}
In \cref{sec:uq}, we present two approaches to quantifying the uncertainty of the estimator. The first approach relies on asymptotic statistical theory to allow us to build an uncertainty interval using a Gaussian distribution. The second approach relies on the non-parametric bootstrap to form an uncertainty interval. We now investigate both of these approaches. 

To investigate the quality of these intervals we consider a CUR approximation of a matrix at different ranks. For each rank, we fix our CUR approximation and compute the true error. Additionally, we form 500 random sets of extra columns. For each set of extra columns, we apply each of our two uncertainty quantification techniques to form 95\% intervals. Finally, we repeat the formation of the 500 random sets of extra columns for value different value of extra columns, $q$,  and determine the percentage of the intervals that contain the true error. This is known as the coverage rate. We compute these coverage rates when a CUR approximation is applied to a $10,000 \times 10,000$ Chan matrix. We display the coverage rates for bootstrap sampling in \cref{tab:chan-cov-boot} and the coverage rates for the Gaussian interval in \cref{tab:chan-cov-gaus}. Additionally, to understand the widths of both intervals, we display the true error surrounded by a single instance of a both interval types formed with 10 extra columns in \cref{fig:bs-together}. We display these intervals both for the $10,000 \times 10,000$ Chan matrix and the Spectro-microscopy matrix.

\begin{figure}
    \centering
    \begin{subfigure}[b]{0.48\textwidth}
        \centering
%
\pgfplotsset{
    colormap={navygreen}{
            rgb255(0pt)=(27,42,74)       
            rgb255(33pt)=(15,110,86)     
            rgb255(66pt)=(61,170,110)    
            rgb255(100pt)=(232,101,10)   
            rgb255(133pt)=(253,246,227)  
        }
}
\definecolor{oxfordnavy}{RGB}{27,42,74}
\definecolor{darkteal}{RGB}{15,110,86}
\definecolor{leafgreen}{RGB}{61,170,110}
\definecolor{signalorange}{RGB}{232,101,10}
\definecolor{cyanteal}{RGB}{32,178,170}

\scalebox{0.7}{
\begin{tikzpicture}
    \begin{axis}[
            width        = \textwidth,
            height       = 8cm,
            xmin         = 0,
            xmax         = 1000,
            xtick        = {50,150,...,950},
            xticklabel style = {rotate=45, anchor=north east, font=\small},
            xlabel       = {target rank},
            ymode        = log,
            grid         = both,
            grid style       = {line width=0.3pt, draw=oxfordnavy!20},
            major grid style = {line width=0.5pt, draw=oxfordnavy!35},
            axis background/.style = {fill=white},
            axis line style  = {oxfordnavy},
            tick style       = {oxfordnavy},
            tick label style = {font=\small, color=oxfordnavy},
            label style      = {font=\small\bfseries, color=oxfordnavy},
            legend pos        = north east,
            legend style      = {
                    font = \small,
                    fill = white,
                    draw = oxfordnavy!50,
                    text = oxfordnavy,
                },
            legend cell align = left,
        ]

        \addplot [
            color       = oxfordnavy,
            line width  = 2pt,
            solid,
            mark        = triangle*,
            mark size   = 1pt,
            mark options= {fill=oxfordnavy, draw=none},
        ]
        table [x=s, y=estimate, col sep=comma]
            {csv-paper/bs-data-1/bci_gau.csv};
        \addlegendentry{Error CUR}


        \addplot [
            color       = cyanteal,
            line width  = 2.5pt,
            solid,
        ]
        table [x=s, y=lower, col sep=comma]
            {csv-paper/bs-data-1/bci_gau.csv};
        \addlegendentry{CI Bootstrap}

        \addplot [
            color       = red,
            line width  = 2.5pt,
            dashed,
        ]
        table [x=s, y=lower_n, col sep=comma]
            {csv-paper/bs-data-1/bci_gau.csv};
        \addlegendentry{CI Gaussian}
        
        \addplot [
            color       = cyanteal,
            line width  = 2.5pt,
            solid,
        ]
        table [x=s, y=upper, col sep=comma]
            {csv-paper/bs-data-1/bci_gau.csv};

        \addplot [
            color       =  red,
            line width  = 2.5pt,
            dashed,
        ]
        table [x=s, y=upper_n, col sep=comma]
            {csv-paper/bs-data-1/bci_gau.csv};
    \end{axis}
\end{tikzpicture}
}
        \caption{Chan matrix.}
        \label{fig:bs}
    \end{subfigure}
    \hfill
    \begin{subfigure}[b]{0.48\textwidth}
        \centering
%
\pgfplotsset{
    colormap={navygreen}{
            rgb255(0pt)=(27,42,74)       
            rgb255(33pt)=(15,110,86)     
            rgb255(66pt)=(61,170,110)    
            rgb255(100pt)=(232,101,10)   
            rgb255(133pt)=(253,246,227)  
        }
}
\definecolor{oxfordnavy}{RGB}{27,42,74}
\definecolor{darkteal}{RGB}{15,110,86}
\definecolor{leafgreen}{RGB}{61,170,110}
\definecolor{signalorange}{RGB}{232,101,10}
\definecolor{cyanteal}{RGB}{32,178,170}

\scalebox{0.7}{
\begin{tikzpicture}
    \begin{axis}[
            width        = \textwidth,
            height       = 8cm,
            xmin         = 0,
            xmax         = 19,
            xtick        = {1,2,...,19},
            xticklabel style = {rotate=45, anchor=north east, font=\small},
            xlabel       = {target rank},
            ymode        = log,
            grid         = both,
            grid style       = {line width=0.3pt, draw=oxfordnavy!20},
            major grid style = {line width=0.5pt, draw=oxfordnavy!35},
            axis background/.style = {fill=white},
            axis line style  = {oxfordnavy},
            tick style       = {oxfordnavy},
            tick label style = {font=\small, color=oxfordnavy},
            label style      = {font=\small\bfseries, color=oxfordnavy},
            legend pos        = north east,
            legend style      = {
                    font = \small,
                    fill = white,
                    draw = oxfordnavy!50,
                    text = oxfordnavy,
                },
            legend cell align = left,
        ]

        \addplot [
            color       = oxfordnavy,
            line width  = 2pt,
            solid,
            mark        = triangle*,
            mark size   = 1pt,
            mark options= {fill=oxfordnavy, draw=none},
        ]
        table [x=s, y=estimate, col sep=comma]
            {csv-paper/bs-data-1/bci_spectro.csv};
        \addlegendentry{Error CUR}


        \addplot [
            color       = cyanteal,
            line width  = 2.5pt,
            solid,
        ]
        table [x=s, y=lower, col sep=comma]
            {csv-paper/bs-data-1/bci_spectro.csv};
        \addlegendentry{CI Bootstrap}

        \addplot [
            color       = red,
            line width  = 2.5pt,
            dashed,
        ]
        table [x=s, y=lower_n, col sep=comma]
            {csv-paper/bs-data-1/bci_spectro.csv};
        \addlegendentry{CI Gaussian}
        
        \addplot [
            color       = cyanteal,
            line width  = 2.5pt,
            solid,
        ]
        table [x=s, y=upper, col sep=comma]
            {csv-paper/bs-data-1/bci_spectro.csv};

        \addplot [
            color       =  red,
            line width  = 2.5pt,
            dashed,
        ]
        table [x=s, y=upper_n, col sep=comma]
            {csv-paper/bs-data-1/bci_spectro.csv};
    \end{axis}
\end{tikzpicture}
}
        \caption{Spectro-microscopy matrix.}
        \label{fig:bs-width}
    \end{subfigure}
    \caption{Confidence intervals. CUR error, root-scale estimate and confidence intervals via bootstrap and Gaussian Estimation for the Chan matrix (\textit{Left}) and for a Spectro-microscopy experiment (\textit{Right}).}
    \label{fig:bs-together}
\end{figure}
Looking at \cref{tab:chan-cov-boot} and \cref{tab:chan-cov-gaus}, we can see that generally both intervals under cover the designed coverage percentage of 95\%. The degree of this undercoverage is fairly similar for both the bootstrap and Gaussian intervals, as long as the number of extra columns is at least 10. At $q=5$, the Gaussian interval offers much better coverage than the bootstrap, although this coverage is still substantially below the nominal coverage percentage of 95\%. At higher numbers of extra columns, the intervals essentially have the same coverage. The undercoverage is not ideal, but it is also not to the extent where the intervals lack practical utility. 
Looking at \cref{fig:bs-together}, it is worth noting that neither interval is excessively wide compared to the actual error. Combining this observation with the coverage information seems to suggest; that one could probably use the upper bounds of these intervals for making stopping decisions. 
It should be noted that the Gaussian interval is substantially cheaper to compute than the bootstrap interval and performs similarly to the bootstrap for the Chan example.

\begin{table}[h]
    \centering
    \begin{tabular}{c|c|c|c|c|c|c|}
         Approx Rank & $q = 5$ & $q = 10$ & $q=15$ & $q = 20$ & $q = 25$ & $q = 30$ \\
         \midrule
         50 & 81.2& 90.6 & 91.0 & 94.2 & 93.4 & 94.4\\
         100 & 84.2& 93.0 & 92.6 & 94.2 & 94.0 & 95.0\\
         150 & 86.8& 90.4 & 91.6 & 95.0 & 93.0 & 92.8\\
         200 & 85.8 & 89.4 & 91.4 & 93.2 & 93.4 & 93.2\\
         250 & 83.6 & 87.6 & 91.4 & 91.6 & 93.0 & 93.4\\
         300 & 87.4 & 92.8 & 94.2 & 93.0 & 96.0 & 95.0\\
         350 & 85.6 & 91.2 & 95 & 94.0 & 94.2 & 94.2\\
         400 & 85.6 & 88.8 & 90.6 & 94.4 & 92.0 & 93.8\\
         450 & 78.8 & 87.6 & 93.4 & 93.0 & 91.8  & 92.6\\
         500 & 84.6 & 89.4 & 91.8 & 91.2 & 92.0 & 92.0\\
         \textbf{Avg.} & \textbf{84.36} & \textbf{90.1} & \textbf{92.3} & \textbf{93.4} & \textbf{93.3} & \textbf{93.6}
    \end{tabular}
    \caption{Table showing the coverage rates of the 95\% bootstrap confidence interval with different numbers of extra columns. Each percentage is out of 500 intervals.}
    \label{tab:chan-cov-boot}
\end{table}

\begin{table}[h]
    \centering
    \begin{tabular}{c|c|c|c|c|c|c|}
         Approx Rank & $q = 5$ & $q = 10$ & $q=15$ & $q = 20$ & $q = 25$ & $q = 30$ \\
         \midrule
         50 & 84.8& 91.4 & 92.0 & 94.8  & 93.0 & 94.0\\
         100 & 87.0 & 92.2 & 92.2 & 93.8 & 93.6 & 93.6\\
         150 & 89.4 & 92.2 & 92.6 & 95.6 & 93.4 & 94.2\\
         200 & 87.8 & 89.8 & 90.4 & 92.2 & 93.0 & 93.0\\
         250 & 88.2 & 91.0 & 92.0 & 93.4 & 94.0 & 93.0\\
         300 & 87.4 & 92.4 & 93.4 & 92.0 & 95.6 & 95.2\\
         350 & 87.6 & 90.4 & 94.2 & 93.4 & 93.2 & 94.6\\
         400 & 90.0 & 89.8 & 90.2 & 93.8  & 92.4 & 94.4\\
         450 & 83.0 & 89.2 & 93.8 & 93.4 & 93.2 & 92.8\\
         500 & 88.6 & 90.4 & 92.2 & 92.6 & 94.0 & 92.6\\
         \textbf{Avg.} & \textbf{87.4} & \textbf{90.9} & \textbf{92.3} & \textbf{93.5} & \textbf{93.5} & \textbf{93.7}
    \end{tabular}
    \caption{Table showing the coverage rates of the 95\% Gaussian interval with different numbers of extra columns. Each percentage is out of 500 intervals.}
    \label{tab:chan-cov-gaus}
\end{table}

\section{Discussion}
In this paper, we have considered the problem of estimating the error of a CUR approximation when the approximated matrix can only be accessed through a subset of its entries.
After discussing the limitations of existing leave-one-out–type estimators when applied to CUR approximations, we introduced an estimator that uses a small number of additional columns, beyond those already employed in the approximation, to sample the residual. We analyzed this estimator by relating its variance to the probability distribution used to select the extra columns and by highlighting the fundamental limitations imposed by the considered access constraints. To assess its practical reliability, we further proposed a bootstrap-based procedure.

Numerical experiments, including real-world applications such as spectro-microscopy, illustrate that a small number of uniformly sampled additional columns is generally sufficient to reliably achieve good estimation accuracy.

This work represents an important step toward the practical deployment of CUR approximations in scenarios where global access to the matrix is unavailable. Moreover, it opens the way to the development of adaptive CUR methodologies under restricted-access settings.

Future research directions include incorporating a rejection strategy into the selection of the extra columns in order to increase the probability of sampling residual components associated with subspaces not captured by the columns used in the approximation, and the derivation of refinement procedures that remain compatible with the restricted-access constraint.
\section*{Acknowledgments}
The authors would like to thank D. Halikias and Y. Nakatsukasa for their invaluable suggestions during the writing of this paper.
\bibliographystyle{plainnat}
\bibliography{references_nlaa}

\end{document}